\newtheorem*{theorem'}{Theorem 1.7}
\def\co{\colon\thinspace}
\newtheorem{remark}[theorem]{Remark}
\begin{document}

% Enter full title and short title for running headers
\title{A spectral sequence of the Floer cohomology of symplectomorphisms of trivial polarization class}
\shorttitle{A spectral sequence for the Floer cohomology of certain symplectomorphisms}

% Author name(s)
\author{Kristen Hendricks\affil{1}}
% Abbreviated author name for running headers
\abbrevauthor{K. Hendricks}
% Abbreviated author name for first page header
\headabbrevauthor{Hendricks, K.}

\address{%
\affilnum{1}Department of Mathematics, UCLA, 520 Portola Plaza\\ 
Los Angeles, CA 90095}

\correspdetails{hendricks@math.ucla.edu}

\received{}
\revised{}
\accepted{}

\communicated{}

\begin{abstract} Let $M$ be an exact symplectic manifold equal to a symplectization near infinity and having stably trivializable tangent bundle, and $\phi\co M\rightarrow M$ be an exact symplectomorphism which, near infinity, is equal to either the identity or the symplectization of a contactomorphism $\hat{\phi}$ such that neither $\hat{\phi}$ nor $\hat{\phi}^2$ has fixed points. We give conditions under which Seidel and Smith's localization theorem for Lagrangian Floer cohomology implies the existence of a spectral sequence from $\mathit{HF}(\phi^2)\otimes \mathbb Z_2((\theta))$ to $\mathit{HF}(\phi)\otimes \mathbb Z_2((\theta))$.\end{abstract}

\maketitle

\section{Introduction}

Fixed point Floer cohomology is an invariant introduced by Floer \cite{MR987770} and refined by Dostoglou and Salamon \cite{MR1297130} which associates to a symplectic manifold $(M,\omega)$ and symplectomorphism $\phi$ (with one of several possible choices of technical conditions) a graded $\mathbb Z_2$-vector space $\mathit{HF}(\phi)$. Floer introduced it hoping to study the Arnol'd conjecture, which he proved in the positively monotone case \cite{MR987770}. The theory is an invariant of the Hamiltonian isotopy class of $\phi$, and has been used to study the symplectic mapping class group by Seidel \cite{MR2441414}, Khovanov and Seidel \cite{MR1862802}, Keating  \cite{MR3217627}, and others. Many computations of of the Floer cohomology of particular symplectomorphisms in the two-dimensional case have been made by Cotton-Clay \cite{MR2529943, MR2713919}, Eftekhary \cite{MR2131640}, and Gautschi \cite{MR2039162}.

The definition of $\mathit{HF}(\phi)$ involves counting pseudoholomorphic cylinders connecting the (perturbed, nondegenerate) fixed points of $\phi$, or equivalently doing Morse theory on a twisted free loop space of $M$. However, there is an identification of $\mathit{HF}(\phi)$ with the Lagrangian Floer cohomology of the graph $\Gamma_{\phi} = \{(a, \phi(a)): a \in M\}$ of $\phi$ and the diagonal $\Delta = \{(a,a): a \in M\}$ in the manifold $M\times M^{-}$ when this theory is well-defined. Here $M^- $ is the symplectic manifold $(M, -\omega)$. (For more on this identification, see Remark \ref{remark:equivalence}.) The purpose of this paper is use this identification and Seidel and Smith's localization theorem for Lagrangian Floer cohomology to give hypotheses under which there exists a spectral sequence from $\mathit{HF}(\phi^2) \otimes \mathbb Z_2((\theta))$ to $\mathit{HF}(\phi) \otimes \mathbb Z_2((\theta))$, where $\mathbb Z_2((\theta))$ is the ring $\mathbb Z_2[[\theta]](\theta^{-1})$.

Our assumptions will be as follows. Let $(M, J, \omega)$ be an exact symplectic manifold with symplectic form $\omega = d\lambda$ and compatible almost complex structure $J$. We ask that $M$ admits a strictly plurisubharmonic function $f\co M \rightarrow \mathbb R$ with compact critical set, so that near infinity $M$ is the symplectization of the contact manifold $M_c = f^{-1}(c)$ for sufficiently large $c$. We do not allow this structure to vary (this is equivalent to starting with a Liouville domain with contact-type boundary, and passing to its symplectization). Let $\phi:M\rightarrow M$ be an exact symplectomorphism. We will require that near infinity $\phi$ is either equal to the identity, or equal to the symplectization of a contactomorphism $\hat{\phi}: M_c \rightarrow M_c$ such that $\hat{\phi}$ and $\hat{\phi}^2$ have no fixed points. We will see that it follows from work of Khovanov and Seidel for the Lagrangian Floer cohomology of Lagrangians with Legendrian boundary in Liouville domains that the Lagrangian Floer cohomology $\mathit{HF}(\Delta, \Gamma_{\phi})$ is well-defined for either of these conditions.  (If $\phi$ is the identity near infinity, we must first deform $\Gamma_{\phi}$ to have compact intersection with $\Delta$; see Section 2 for details.)

Our first major hypothesis is that the tangent bundle $TM$ is stably trivialized as a symplectic vector bundle. We then consider the map $\Phi \co M\rightarrow Sp(\infty)$ into the symplectic group induced by the action of $\phi_*$ on the stabilized tangent bundle. After picking a deformation retract from $Sp(\infty)$ to $U$, we see that $[\Phi]$ is a class in $K^1(M)$. We call it the polarization class. (This is a slight abuse of notation: the class $[\Phi]$ depends on the trivialization of $TM$, although whether it vanishes does not.)

\begin{theorem} \label{thm:main} If $TM$ is stably trivialized as a symplectic vector bundle and the polarization class $[\Phi]$ is trivial, there is a spectral sequence with $E^1$ page $\mathit{HF}(\phi^2) \otimes \mathbb Z_2((\theta))$ and $E^{\infty}$ page $\mathbb Z_2((\theta))$-isomorphic to $\mathit{HF}(\phi)\otimes \mathbb Z_2((\theta))$.
\end{theorem}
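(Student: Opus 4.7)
The plan is to realize the desired spectral sequence as an instance of Seidel and Smith's localization theorem for Lagrangian Floer cohomology applied to a doubled configuration. Let $\tilde M = (M \times M^-) \times (M \times M^-)$, equipped with the $\mathbb Z/2$-involution $\tau(x_1, y_1, x_2, y_2) = (x_2, y_2, x_1, y_1)$ swapping the two $M \times M^-$ factors; its fixed locus $\tilde M^\tau$ is the diagonally embedded copy of $M \times M^-$. In $\tilde M$ consider the two $\tau$-invariant Lagrangians
\[
L_0 = \Delta \times \Delta, \qquad L_1 = \{(x, \phi(y), y, \phi(x)) : x, y \in M\}.
\]
A direct computation shows that $L_0 \cap L_1$ consists of tuples $(x, x, \phi(x), \phi(x))$ with $x \in \mathrm{Fix}(\phi^2)$, while $L_0^\tau = \Delta$ and $L_1^\tau = \Gamma_\phi$ inside $\tilde M^\tau \cong M \times M^-$, so $L_0^\tau \cap L_1^\tau = \mathrm{Fix}(\phi)$.

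Next I would identify the Floer cohomologies $\mathit{HF}(L_0, L_1) \cong \mathit{HF}(\phi^2)$ and $\mathit{HF}(L_0^\tau, L_1^\tau) \cong \mathit{HF}(\phi)$. The latter is the standard graph identification noted in the introduction. For the former, after reordering the factors of $\tilde M$ as $(M \times M) \times (M \times M)^-$, the Lagrangian $L_0$ becomes the diagonal and $L_1$ becomes the graph of the symplectomorphism $\Psi\colon M\times M \to M\times M$ given by $(x, y) \mapsto (\phi(y), \phi(x))$. Thus $\mathit{HF}(L_0, L_1) = \mathit{HF}(\Psi)$, and since $\mathrm{Fix}(\Psi) \leftrightarrow \mathrm{Fix}(\phi^2)$ via $x \mapsto (x, \phi(x))$ with $\Psi^2 = \phi^2 \times \phi^2$, a direct unfolding of holomorphic cylinders for a product almost complex structure identifies $\mathit{HF}(\Psi) \cong \mathit{HF}(\phi^2)$.

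It remains to verify the hypotheses of Seidel-Smith in this noncompact setting. The Lagrangians $L_0$ and $L_1$ inherit cylindrical structure from $M$, and the assumption that neither $\hat\phi$ nor $\hat\phi^2$ has fixed points (or a Hamiltonian perturbation argument as in Section 2, when $\phi$ is the identity near infinity) ensures that $L_0 \cap L_1$ is compact and that the relevant Floer cohomologies are well-defined. The critical hypothesis is the existence of a $\tau$-equivariant stable Lagrangian-Brane structure on $(L_0, L_1)$. The stable trivialization of $TM$ provides such a structure for $L_0 = \Delta \times \Delta$, on which $\tau$ acts simply by swap of the two factors. For $L_1$, the parameterization $(u, v) \mapsto (u, \phi_* v, v, \phi_* u)$ of its tangent bundle shows that $\tau$ acts via a swap twisted by $\phi_*$, and upgrading the stable trivialization to a $\tau$-equivariant one is obstructed precisely by the polarization class $[\Phi] \in K^1(M)$; the hypothesis $[\Phi] = 0$ produces the required equivariant structure on the pair.

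The main obstacle is this K-theoretic translation, namely showing that vanishing of the polarization class $[\Phi]$ as defined in the introduction is equivalent to the specific equivariant normal trivialization required by the Seidel-Smith framework, together with checking that their localization theorem extends cleanly to the exact noncompact setting of symplectizations and the technical setup for $\mathit{HF}(\phi), \mathit{HF}(\phi^2)$ developed in Section 2. Once those pieces are in place, the localization theorem directly produces a spectral sequence from $\mathit{HF}(L_0, L_1) \otimes \mathbb Z_2((\theta))$ converging to $\mathit{HF}(L_0^\tau, L_1^\tau) \otimes \mathbb Z_2((\theta))$, which under the identifications above is the spectral sequence claimed.
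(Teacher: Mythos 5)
Your setup is essentially the paper's: you apply Seidel--Smith localization in the fourfold product $M\times M^-\times M\times M^-$ with the swap involution, and your configuration $(L_0,L_1)=(\Delta\times\Delta,\ \{(x,\phi(y),y,\phi(x))\})$ is just a mirror image of the paper's $(\Gamma_\phi\Gamma_\phi,\Delta\Delta)$, with the $\phi$-twisting moved from the product-type Lagrangian to the twisted-pairing one; your computations of the intersections and fixed loci are correct, and your unfolding identification $\mathit{HF}(L_0,L_1)\simeq \mathit{HF}(\phi^2)$ is the same mechanism as the paper's Proposition \ref{propn:isomorphism} (a symplectomorphism of the product plus the folding/unfolding of strips), modulo the fact that in this noncompact conical setting one has to carry out that argument with some care, as is done in Section 2.

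The genuine gap is the step you yourself flag as ``the main obstacle'': the verification of Seidel and Smith's hypothesis. First, the hypothesis is not a ``$\tau$-equivariant stable Lagrangian-Brane structure on $(L_0,L_1)$''; it is a \emph{stable normal trivialization}, i.e.\ a stable unitary trivialization of the normal bundle of the fixed locus $(M^4)^{\operatorname{inv}}\simeq M\times M^-$ together with paths of Lagrangian subbundles over $L_i^{\operatorname{inv}}\times I$ interpolating between the Lagrangian normal bundles $NL_i^{\operatorname{inv}}$ and the constant Lagrangians $\mathbb R^k$, $i\mathbb R^k$ --- data living entirely on the fixed locus, not equivariant data upstairs. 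Second, the assertion that producing this structure ``is obstructed precisely by the polarization class'' is neither established nor needed; what the theorem requires is the \emph{construction}: one identifies the normal bundles of $(M^4)^{\operatorname{inv}}$, $\iota(\Gamma_\phi)$, $\iota(\Delta)$ with $T(M\times M^-)$, $T\Gamma_\phi$, $N\Delta$, observes that a stable unitary trivialization of $TM$ sends $N\Delta$ (stabilized) to a \emph{constant} Lagrangian subspace which a fixed unitary matrix $A$ rotates to $i\mathbb R^{2k}$, and then uses a chosen nullhomotopy $\Phi^t$ of $\Phi\co M\to Sp(2k)$ to build by hand the interpolating Lagrangian subbundle over $\Gamma_\phi\times I$ (equivalently, over your $L_1^{\operatorname{inv}}\times I$) whose endpoint maps to $\mathbb R^{2k}$. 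This explicit construction is Proposition \ref{propn:stable} and is the technical heart of the proof of Theorem \ref{thm:main}; without it your argument reduces to an appeal to the conclusion you are trying to prove. Until you supply that construction (and the compactly supported adaptation of the localization argument near infinity, which the paper also addresses), the proposal is an accurate outline of the strategy but not a proof.
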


The assumption that $TM$ is stably trivialized and the map $\Phi\co M\rightarrow Sp(\infty)$ is nulhomotopic has appeared previously in the literature as a possible restriction on Floer cohomology constructions; it is used in work of Cohen, Jones, and Segal as a prerequisite to constructing a homotopy theory for $\mathit{HF}(\phi)$ \cite{MR1362832}.

Theorem \ref{thm:main} has the following corollaries. Let $M$ and $\phi$ satisfy the hypotheses above.

\begin{corollary} \label{corollary:inequality} If the hypotheses of Theorem \ref{thm:main} are satisfied, there is a rank inequality

\vskip 3mm
\hfill $\displaystyle \operatorname{rk}\left(\mathit{HF}(\phi^2)\right) \geq \operatorname{rk}(\mathit{HF}(\phi)).$ 
\end{corollary}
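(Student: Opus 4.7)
The plan is to deduce the rank inequality from Theorem~\ref{thm:main} by a standard spectral-sequence argument, using the fact that $\mathbb Z_2((\theta))$ is a field. First I would note that under the standing hypotheses on $M$ and $\phi$ (compact critical set of $f$, nondegeneracy assumptions near infinity), both $\mathit{HF}(\phi)$ and $\mathit{HF}(\phi^2)$ are finite-dimensional $\mathbb Z_2$-vector spaces, so their ranks are well-defined finite numbers. Since $\mathbb Z_2((\theta))$ is a (flat) field extension of $\mathbb Z_2$, tensoring preserves these ranks:
\begin{equation*}
\operatorname{rk}_{\mathbb Z_2((\theta))} \bigl(\mathit{HF}(\phi) \otimes \mathbb Z_2((\theta))\bigr) = \operatorname{rk}_{\mathbb Z_2}\bigl(\mathit{HF}(\phi)\bigr),
\end{equation*}
and similarly for $\phi^2$.

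Next I would appeal to Theorem~\ref{thm:main} to obtain the spectral sequence with $E^1$ page $\mathit{HF}(\phi^2)\otimes \mathbb Z_2((\theta))$ converging to something $\mathbb Z_2((\theta))$-isomorphic to $\mathit{HF}(\phi)\otimes \mathbb Z_2((\theta))$. Since everything in sight is finite-dimensional over the field $\mathbb Z_2((\theta))$, the general fact that each differential on page $E^r$ can only decrease (or preserve) the total rank when passing to $E^{r+1}$ gives
\begin{equation*}
\operatorname{rk}_{\mathbb Z_2((\theta))} E^{\infty} \;\leq\; \operatorname{rk}_{\mathbb Z_2((\theta))} E^{1}.
\end{equation*}
Combining this with the identifications of the two pages and the rank-preservation under tensoring yields $\operatorname{rk}\bigl(\mathit{HF}(\phi^2)\bigr) \geq \operatorname{rk}\bigl(\mathit{HF}(\phi)\bigr)$.

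There is essentially no hard step here; the argument is formal once Theorem~\ref{thm:main} is in hand. The only point one has to be mildly careful about is that the spectral sequence is one of $\mathbb Z_2((\theta))$-modules, not merely $\mathbb Z_2$-modules, so the rank comparison must be performed over the field $\mathbb Z_2((\theta))$ before being translated back to a comparison of $\mathbb Z_2$-ranks via the tensoring identity above.
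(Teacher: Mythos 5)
Your argument is correct and is essentially the reasoning the paper leaves implicit: the corollary is treated as an immediate consequence of Theorem \ref{thm:main} (and of the rank inequality already built into Seidel and Smith's Theorem 20), via exactly the formal observation you make — over the field $\mathbb Z_2((\theta))$ each page of the spectral sequence is a subquotient of the previous one, so $\operatorname{rk} E^{\infty} \leq \operatorname{rk} E^{1}$, and tensoring the finite-dimensional $\mathbb Z_2$-vector spaces $\mathit{HF}(\phi)$, $\mathit{HF}(\phi^2)$ with the field extension $\mathbb Z_2((\theta))$ preserves rank. No gap; your care about working over $\mathbb Z_2((\theta))$ before translating back to $\mathbb Z_2$-ranks is exactly the right (and only) point to check.
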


Now let $\operatorname{Symp}^c(M)$ be the group of symplectomorphisms of $M$ which are equal to the identity outside a compact set. The following is a direct consequence of the rank inequality of Corollary \ref{corollary:inequality}. Note that if $[\phi]$ is trivial in $\pi_0(\operatorname{Symp}^c(M))$, then $\operatorname{rk}(\mathit{HF}(\phi)) = \operatorname{rk}(H^*(M; \mathbb Z_2))$ \cite{MR1432464, MR2424172}.

\begin{corollary} Let $M$ and $\phi$ be a symplectic manifold and exact symplectomorphism satisfying the hypotheses of Theorem \ref{thm:main} and such that $\phi$ is equal to the identity outside a compact set. Suppose that \begin{align*}
\operatorname{rk}(\mathit{HF}(\phi))> \operatorname{rk}(H^*(M; \mathbb Z_2)). \end{align*}
Then the classes $[\phi^{2^n}] \in \pi_0(\operatorname{Symp}^c(M))$ are all nontrivial.
\end{corollary}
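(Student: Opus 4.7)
The plan is to argue by contradiction, iterating Corollary~\ref{corollary:inequality} and combining it with the cited identification of $\mathit{HF}$ with singular cohomology for compactly supported symplectomorphisms that are trivial in $\pi_0(\operatorname{Symp}^c(M))$. The main work will be verifying that every iterate $\phi^{2^n}$ satisfies the hypotheses of Theorem~\ref{thm:main}, so that Corollary~\ref{corollary:inequality} can be legitimately applied at every stage of the iteration.

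First I would establish closure of the hypotheses under composition, and in particular under squaring. Compact support and exactness are both immediate: compact support of $\phi$ clearly propagates to every power, and the chain rule for pullbacks gives $(\phi\circ\psi)^*\lambda-\lambda=d(h_\phi\circ\psi+h_\psi)$ whenever both factors are exact with primitives $h_\phi,h_\psi$. The only real content is triviality of the polarization class of $\phi^{2^n}$. Here I would unwind the linearization $(\phi\circ\psi)_*|_x=\phi_*|_{\psi(x)}\circ\psi_*|_x$ through the fixed stable trivialization of $TM$ to obtain the pointwise identity $\Phi_{\phi\circ\psi}(x)=\Phi_\phi(\psi(x))\cdot\Phi_\psi(x)$ in $Sp(\infty)$. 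In $K^1(M)=[M,U]$ this reads $[\Phi_{\phi\circ\psi}]=[\Phi_\phi\circ\psi]+[\Phi_\psi]$, and since null-homotopy is preserved under precomposition with any continuous self-map, $[\Phi_\phi]=0$ forces $[\Phi_{\phi^{2^n}}]=0$ by induction on $n$. This additivity-in-$K^1$ step is where I expect the only real obstacle to lie, though it is a mild one; the rest is formal.

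Once the iterates are known to satisfy the hypotheses of Theorem~\ref{thm:main}, Corollary~\ref{corollary:inequality} applies to each pair $(\phi^{2^k},\phi^{2^{k+1}})$ and telescopes to $\operatorname{rk}(\mathit{HF}(\phi^{2^n}))\geq\operatorname{rk}(\mathit{HF}(\phi))$ for every $n\geq 0$. If some $[\phi^{2^n}]$ were trivial in $\pi_0(\operatorname{Symp}^c(M))$, the cited computations \cite{MR1432464, MR2424172} would give $\operatorname{rk}(\mathit{HF}(\phi^{2^n}))=\operatorname{rk}(H^*(M;\mathbb Z_2))$; concatenating with the telescoped inequality and the hypothesis $\operatorname{rk}(\mathit{HF}(\phi))>\operatorname{rk}(H^*(M;\mathbb Z_2))$ would yield $\operatorname{rk}(H^*(M;\mathbb Z_2))\geq\operatorname{rk}(\mathit{HF}(\phi))>\operatorname{rk}(H^*(M;\mathbb Z_2))$, a contradiction.
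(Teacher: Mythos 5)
Your proposal is correct and follows the same route the paper takes (the paper treats this as a direct consequence of Corollary~\ref{corollary:inequality} together with the cited fact that $\operatorname{rk}(\mathit{HF}(\psi))=\operatorname{rk}(H^*(M;\mathbb Z_2))$ whenever $[\psi]$ is trivial in $\pi_0(\operatorname{Symp}^c(M))$): iterate the rank inequality along the powers $\phi^{2^k}$ and derive a contradiction. Your additional check that the hypotheses of Theorem~\ref{thm:main} persist under squaring --- in particular the identity $\Phi_{\phi\circ\psi}(x)=\Phi_\phi(\psi(x))\cdot\Phi_\psi(x)$ and the resulting additivity $[\Phi_{\phi\circ\psi}]=[\Phi_\phi\circ\psi]+[\Phi_\psi]$ in $[M,Sp(\infty)]$ --- is correct and usefully fills in a step the paper leaves implicit.
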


There are also two special cases in which the hypotheses of Theorem \ref{thm:main} are automatically satisfied.

\begin{corollary} Let $M = T^*S^n$ for $n$ even, and $\phi:M\rightarrow M$ any symplectomorphism which, near infinity, is equal to the identity or the symplectization of a contactomorphism $\hat{\phi}$ such that $\hat{\phi}$ and $\hat{\phi}^2$ have no fixed points. Then there is a spectral sequence whose $E^1$ page is $\mathit{HF}(\phi^2) \otimes \mathbb Z_2((\theta))$ and whose $E^{\infty}$ page is $\mathbb Z_2((\theta))$-isomorphic to $\mathit{HF}(\phi) \otimes \mathbb Z_2((\theta))$.
\end{corollary}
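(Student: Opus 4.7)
The plan is to reduce the statement to Theorem~\ref{thm:main} by checking that its two hypotheses hold automatically when $M = T^*S^n$ with $n$ even: namely (a) $TM$ is stably trivial as a symplectic vector bundle, and (b) the polarization class $[\Phi] \in K^1(M)$ vanishes for every $\phi$ satisfying the behaviour at infinity assumed in the corollary.

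First I would verify (a). Since $T^*S^n$ deformation retracts onto its zero section, it suffices to show that $T(T^*S^n)|_{S^n}$ is stably trivial as a symplectic bundle, equivalently as a complex bundle with respect to a compatible almost complex structure $J$. Because $S^n$ is Lagrangian, such a $J$ yields the standard identification $T(T^*S^n)|_{S^n} \cong TS^n \otimes_{\mathbb R} \mathbb C$ of complex vector bundles. The embedding $S^n \subset \mathbb R^{n+1}$ has trivial normal bundle, so $TS^n \oplus \underline{\mathbb R}$ is trivial as a real bundle; complexifying gives a stable complex trivialization of $TS^n \otimes_{\mathbb R} \mathbb C$, and pulling back through the deformation retraction yields a stable symplectic trivialization of $TM$.

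Next I would verify (b) by a $K$-theoretic calculation. Using the deformation retraction of $Sp(\infty)$ onto $U$ singled out in the statement of Theorem~\ref{thm:main}, the polarization class lives in $[M, U] = K^1(M)$. Homotopy invariance gives $K^1(M) \cong K^1(S^n)$, and Bott periodicity gives $K^1(S^n) = \pi_n(U) = 0$ for every even $n$. Hence $K^1(M) = 0$, and so $[\Phi] = 0$ regardless of the choice of $\phi$ and of the stable trivialization built in step (a).

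With (a) and (b) established, Theorem~\ref{thm:main} produces the desired spectral sequence immediately, so no additional Floer-theoretic input is needed. I do not expect a serious obstacle: both steps are standard bundle- and $K$-theoretic facts. The only point that deserves care is the classical passage between stable symplectic trivializations and stable complex trivializations, which follows from the contractibility of the space of compatible almost complex structures on any symplectic vector space.
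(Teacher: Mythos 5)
Your argument follows essentially the same route as the paper: reduce to Theorem~\ref{thm:main} by noting that $T(T^*S^n)$ is stably trivial (because $TS^n$ is stably trivial, complexified along the Lagrangian zero section) and that $[T^*S^n, U] \cong [S^n,U] = \pi_n(U) = 0$ for $n$ even, so the polarization class automatically vanishes. Your write-up is just a more detailed version of the paper's two-line proof, and both of those steps are correct.

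There is, however, one hypothesis you did not check. Theorem~\ref{thm:main} is stated under the standing assumption that $\phi$ is an \emph{exact} symplectomorphism (this is part of the setup preceding the theorem, and exactness of $\Gamma_\phi$ is used in Section~2 to make the Lagrangian Floer theory well-defined), whereas the corollary allows an arbitrary symplectomorphism with the prescribed behaviour at infinity. So before invoking the theorem you must argue that exactness is automatic here: since $\phi$ is a symplectomorphism, $\phi^*\lambda - \lambda$ is closed, and because $H^1(T^*S^n;\mathbb R) \cong H^1(S^n;\mathbb R) = 0$ (for $n \geq 2$), every closed one-form on $M$ is exact, hence $\phi$ is exact. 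This is exactly the one-sentence observation with which the paper's proof begins; adding it closes the only gap in your proposal.
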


\begin{proof} Since $H^1(M) = 0$, $\phi$ is automatically exact. Furthermore, $T(T^*S^n)$ is stably trivializable since $TS^n$ is. The set of maps $[T^*S^n,Sp(\infty)]$ is equal to $\langle S^n, U \rangle = \pi_n(U)=0$ since $n$ is even. Hence the map $\Phi$ must be nulhomotopic.
\end{proof}

The same holds if we have a \emph{plumbing} of even-dimensional spheres. The notion of symplectic plumbing was introduced by Gompf\cite{MR1270434} after being suggested by Gromov \cite{MR864505}; a very concrete definition can be found in \cite[Definition 2.1]{MR2984118}.

\begin{corollary} Let $M$ be a symplectic plumbing of cotangent bundles of even-dimensional spheres along some tree, $\phi:M\rightarrow M$ any symplectomorphism which, near infinity, is equal to the identity or the symplectization of a contactomorphism $\hat{\phi}$ such that $\hat{\phi}$ and $\hat{\phi}^2$ have no fixed points. Then there is a spectral sequence whose $E^1$ page is $\mathit{HF}(\phi^2) \otimes \mathbb Z_2((\theta))$ and whose $E^{\infty}$ page is $\mathbb Z_2((\theta))$-isomorphic to $\mathit{HF}(\phi) \otimes \mathbb Z_2((\theta))$.
\end{corollary}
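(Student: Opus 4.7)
The plan is to verify the three hypotheses required to invoke Theorem \ref{thm:main}: exactness of $\phi$, stable triviality of $TM$ as a symplectic vector bundle, and vanishing of the polarization class $[\Phi]$. The strategy mimics the proof of the previous corollary, with the extra input being an understanding of the homotopy type of a plumbing.

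First, I would identify the homotopy type of $M$. A plumbing of disk bundles over spheres along a tree deformation retracts onto the union of the zero sections glued at the plumbing points, which is a simply connected tree of $n$-spheres. Since each sphere is simply connected (as $n \geq 2$), collapsing embedded contractible arcs in the individual $S^n$'s connecting the plumbing points to a common basepoint exhibits this as homotopy equivalent to $\bigvee_{i} S^n_i$, indexed by the vertices of the tree. From this I immediately obtain $H^1(M;\mathbb{R})=0$, so every symplectomorphism of $M$ is exact and the first hypothesis is automatic.

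Second, for stable triviality of $TM$ as a symplectic (equivalently, complex, using a compatible $J$) vector bundle, I would reduce to a check on each zero section via the retraction $M \simeq \bigvee S^n_i$. Since a complex vector bundle over a wedge of spheres is stably trivial if and only if its restriction to each summand is, it suffices to note that $TM|_{S^n_i} \cong TS^n_i \oplus T^*S^n_i$, which is stably trivial because $TS^n$ is stably trivial for every $n$. Third, for the polarization class: after choosing a stable trivialization of $TM$, the map $\Phi\co M \to Sp(\infty)$ represents a class in $[M, U] \cong [\bigvee_i S^n, U] \cong \bigoplus_i \pi_n(U)$, and Bott periodicity gives $\pi_n(U)=0$ for $n$ even. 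Hence $[\Phi]=0$, and Theorem \ref{thm:main} applies.

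The proof is essentially formal once the homotopy equivalence $M \simeq \bigvee S^n$ is in hand, and the only mild subtlety is the reduction of the plumbing to a wedge: the plumbing points occur at distinct points of each sphere, so one cannot write it as a literal wedge at a common basepoint, but simple connectivity of $S^n$ lets one slide the plumbing points together. I do not anticipate any serious obstacle beyond this standard homotopy-theoretic fact.
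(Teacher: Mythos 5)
Your proposal is correct and follows exactly the argument the paper intends (the paper simply says "the same holds" as for $T^*S^n$, leaving the details implicit): exactness from $H^1(M)=0$, stable triviality of $TM$ checked on the zero sections where $TM|_{S^n}\cong TS^n\otimes\mathbb{C}$ is stably trivial, and vanishing of $[\Phi]$ because $M$ is homotopy equivalent to a wedge of even-dimensional spheres and $\pi_n(U)=0$ for $n$ even. Your filling-in of the homotopy equivalence of the tree plumbing with a wedge of spheres is the only detail the paper omits, and it is handled correctly.
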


The proof of Theorem \ref{thm:main} relies first on the following proposition, which is a consequence of a well-known strategy for unfolding holomorphic curves. A proof can be found in Section 2. Let $M^4 = M^-\times M\times M^- \times M$, and let $\Delta\Delta = \{(a,b,b,a)\}$ and $\Gamma_{\phi}\Gamma_{\phi} = \{(a,\phi(a),b,\phi(b)): a,b \in M\}$, which are each exact Lagrangians in $M^4$. We have the following.

\begin{proposition} \label{propn:isomorphism} There is an isomorphism
\vskip 3mm
\hfill $\displaystyle\mathit{HF}(\Gamma_{\phi}\Gamma_{\phi}, \Delta\Delta) \simeq \mathit{HF}(\Gamma_{\phi^2}, \Delta).$
\end{proposition}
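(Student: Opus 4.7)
The plan is to prove the isomorphism at the chain level by unfolding each Floer strip in $M^4$ with boundary on $\Delta\Delta$ and $\Gamma_\phi\Gamma_\phi$ into a twisted Floer trajectory for $\phi^2$. The generator-level bijection is immediate: any intersection point $(x_1, x_2, x_3, x_4) \in \Delta\Delta \cap \Gamma_\phi\Gamma_\phi$ satisfies $x_2 = x_3$, $x_1 = x_4$, $x_2 = \phi(x_1)$, and $x_4 = \phi(x_3)$, which combine to force $x_1 = \phi^2(x_1)$. The correspondence $(a, \phi(a), \phi(a), a) \leftrightarrow (a, a)$ therefore identifies $\Delta\Delta \cap \Gamma_\phi\Gamma_\phi$ with $\mathrm{Fix}(\phi^2) = \Delta \cap \Gamma_{\phi^2}$ in $M \times M^-$.

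For the differential, given a Floer strip $u = (u_1, u_2, u_3, u_4)\co \mathbb{R}\times[0,1]\to M^4$ with the stated boundary conditions, I would assemble the four components into a single map $V\co \mathbb{R}\times[0,4]\to M$ by the rule
\[
V(s,t) = \begin{cases} u_4(s, t), & t\in[0,1], \\ \phi(u_3(s, 2-t)), & t\in[1,2], \\ \phi(u_2(s, t-2)), & t\in[2,3], \\ \phi^2(u_1(s, 4-t)), & t\in[3,4]. \end{cases}
\]
Continuity at $t=1$ and $t=3$ follows from the two $\Gamma_\phi\Gamma_\phi$ conditions at $t=1$, continuity at $t=2$ from one of the $\Delta\Delta$ conditions at $t=0$, and the remaining relation $u_1(s,0) = u_4(s,0)$ gives $V(s,4) = \phi^2(V(s,0))$, so that after rescaling $V$ is a twisted Floer trajectory for $\mathit{HF}(\phi^2) \simeq \mathit{HF}(\Gamma_{\phi^2}, \Delta)$. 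The reflections on the $M^-$-valued pieces $u_1, u_3$ and the powers of $\phi$ absorbed in the formula are precisely what is needed to convert each $u_i$'s Cauchy-Riemann equation into a single equation for $V$ with $t$-dependent almost complex structure equal to $J$ on $[0,1]$, $\phi_* J$ on $[1,3]$, and $(\phi^2)_* J$ on $[3,4]$, provided the structure on $M^4$ is the product $(-J, J, -J, J)$. Undoing the reflections and $\phi^k$-translations after subdividing the domain of a twisted trajectory inverts the construction, yielding mutually inverse bijections of moduli spaces that preserve index and energy; an isomorphism of Floer cochain complexes and hence of cohomology then follows.

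The main obstacle I expect is transversality bookkeeping. To make the bijection rigorous one must restrict the auxiliary data on $M^4$ (Hamiltonians and almost complex structures) to those arising from a single $t$-dependent Hamiltonian and almost complex structure on $M$ via the unfolding formula, and then show that this restricted class of perturbations is still generic enough to achieve regularity for all moduli spaces on both sides simultaneously. This is a standard adaptation of Floer-theoretic transversality arguments to the quilted setting; the noncompactness of $M$ is handled by the hypotheses on $\phi$ near infinity, which confine Floer trajectories to a compact region as in the setup used throughout the rest of the paper.
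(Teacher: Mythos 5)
Your construction is correct, and I verified the bookkeeping: the seam conditions at $t=1,2,3$ do follow from the two $\Gamma_\phi\Gamma_\phi$ conditions and one $\Delta\Delta$ condition, the remaining $\Delta\Delta$ condition gives the twisted periodicity $V(s,4)=\phi^2(V(s,0))$, and with the product structure $(-J,J,-J,J)$ the pieces do satisfy Floer's equation for $J$, $\phi_*J$, $\phi_*J$, $(\phi^2)_*J$ respectively. But your route differs from the paper's. The paper factors the argument into two modular steps: it first applies the global symplectomorphism $\operatorname{id}\times\phi\times\phi\times\operatorname{id}$ of $M^4$, which preserves $\Delta\Delta$ and carries $\Gamma_\phi\Gamma_\phi$ to $\Gamma_{\phi^2}\Delta$ (transporting pseudoholomorphic curves to curves for the pushed-forward almost complex structure, perturbed near infinity if needed), and then invokes the product-folding lemma $\mathit{HF}(L_0,L_1)\simeq\mathit{HF}(L_0\times L_1,\Delta)$ (Lemma \ref{lemma:product}) with $N=M\times M^-$, $L_0=\Gamma_{\phi^2}$, $L_1=\Delta$. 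You instead unfold all four components at once into a $\phi^2$-twisted cylinder, i.e.\ you compose the paper's two folds, together with the graph-versus-twisted-trajectory identification of Remark \ref{remark:equivalence}, into one explicit formula. The paper's route stays inside Lagrangian Floer theory, which matters because $\mathit{HF}(\phi^2)$ is \emph{defined} here as $\mathit{HF}(\Gamma_{\phi^2},\Delta)$; your final step $\mathit{HF}(\phi^2)\simeq\mathit{HF}(\Gamma_{\phi^2},\Delta)$ is therefore an extra (standard) identification, avoidable by refolding your $[0,4]$-strip into a strip in $M\times M^-$ with boundary on $\Gamma_{\phi^2}$ and $\Delta$. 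Your route, in exchange, makes the generator bijection and moduli correspondence completely explicit.

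Two refinements. First, with a $t$-independent product structure your family $J$, $\phi_*J$, $(\phi^2)_*J$ jumps at $t=1,3$, so $V$ need not be smooth across those seams; as in Lemma \ref{lemma:product} and Remark \ref{remark:equivalence}, start from a smooth family $J_t$ on $M$ with $J_{t+4}=(\phi^2)_*J_t$, define the structure on $M^4$ by your unfolding formula, and get seam smoothness from continuity plus elliptic regularity. Second, the transversality issue you flag resolves more simply than proving genericity within a restricted class on $M^4$: choose regular data for the problem on $M$ (generic there), transport it through the unfolding—the linearized operators correspond, so regularity transfers—and use invariance of $\mathit{HF}(\Gamma_\phi\Gamma_\phi,\Delta\Delta)$ under change of almost complex structure. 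In the case where $\phi$ is the identity near infinity, run the identical argument with $\psi_{1/2}\circ\phi$ in place of $\phi$, landing on $\mathit{HF}(\Gamma_{\psi_1\circ\phi^2},\Delta)$ as in Remark \ref{rmk:cases}.
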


Therefore, we have an identification of $\mathit{HF}(\phi^2)$ with $\mathit{HF}(\Gamma_{\phi}\Gamma_{\phi}, \Delta\Delta)$, and we will concentrate our attention on the latter version of this theory. 

The second, more involved, ingredient of our proof is a theorem of Seidel and Smith \cite{MR2739000} which gives a localization spectral sequence for Lagrangian Floer cohomology under certain rigid technical conditions. Let $N$ be an exact symplectic manifold which is convex at infinity, and let $L_0$ and $L_1$ be two exact Lagrangian submanifolds such that $L_0 \cap L_1$ is compact and all holomorphic disks counted by the Lagrangian Floer differential for $L_0$ and $L_1$ lie in a compact set. Let $\tau \co N \rightarrow N$ be a symplectic involution which preserves $L_0$ and $L_1$ setwise. Then the fixed set of $\tau$ is a symplectic manifold $N^{\operatorname{inv}}$ containing Lagrangians $L_0^{\operatorname{inv}}$ and $L_1^{\operatorname{inv}}$ which are the fixed sets of $L_0$ and $L_1$ respectively. There is an additional, highly nontrivial, hypothesis on $(N,L_0,L_1)$ called the existence of a \textit{stable normal trivialization}. We will say more about this hypothesis in Section 4.

\begin{theorem}\label{thm:localization}\cite[Theorem 20]{MR2739000} Suppose $(N,L_0,L_1)$ has a stable normal trivialization. Then there is a spectral sequence whose first page is $\mathit{HF}(L_0,L_1)\otimes \mathbb Z_2[[\theta]]$ and a map $\Lambda: E^{\infty} \rightarrow \mathit{HF}(L_0^{\operatorname{inv}},L_1^{\operatorname{inv}})\otimes \mathbb Z_2[[\theta]]$ from the $E^{\infty}$ page which becomes an isomorphism after tensoring with $\theta^{-1}$.
\end{theorem}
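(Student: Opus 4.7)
The plan is to mimic the Borel-equivariant construction from classical topology in the Floer setting: build a chain complex over $\mathbb Z_2[[\theta]]$ whose leading-order differential recovers the ordinary Floer differential $\partial$ and whose higher-order terms encode the $\mathbb Z/2$-action generated by $\tau$; filter by powers of $\theta$ to obtain the spectral sequence; and invert $\theta$ to reduce the complex to $\mathit{CF}(L_0^{\mathrm{inv}},L_1^{\mathrm{inv}})$. First I would choose a $\mathbb Z/2$-equivariant CW decomposition of $E\mathbb Z/2=S^\infty$ with two cells in each dimension (exchanged by the antipode), together with a family of admissible almost complex structures and Hamiltonian perturbations parameterized by $S^\infty$, invariant under the simultaneous action of the antipode on $S^\infty$ and of $\tau$ on $N$. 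Counting rigid elements of the parameterized moduli spaces of $(L_0,L_1)$-strips over each cell defines operators $\delta_k$ on $\mathit{CF}(L_0,L_1)$, and assembling them into
\[
d^{\mathrm{eq}} \;=\; \partial + \theta\,\delta_1 + \theta^2\,\delta_2 + \cdots
\]
on $\mathit{CF}(L_0,L_1)\otimes \mathbb Z_2[[\theta]]$ gives a square-zero differential by a standard boundary analysis of one-dimensional parameterized strata.

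Second, the $\theta$-adic filtration on $(\mathit{CF}(L_0,L_1)\otimes\mathbb Z_2[[\theta]],\,d^{\mathrm{eq}})$ immediately produces a spectral sequence with $E^1$ page $\mathit{HF}(L_0,L_1)\otimes\mathbb Z_2[[\theta]]$. To build the localization map $\Lambda$, I would invert $\theta$ and then, following the Smith-theoretic strategy for equivariant cohomology, observe that $\tau$ permutes non-fixed intersection points of $L_0\cap L_1$ in pairs; after inverting $\theta$ each such pair contributes an acyclic summand, while invariant intersection points survive and freely generate a $\mathbb Z_2((\theta))$-module canonically identified with $\mathit{CF}(L_0^{\mathrm{inv}},L_1^{\mathrm{inv}})\otimes\mathbb Z_2((\theta))$. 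To match differentials on the two sides, one uses an adiabatic/neck-stretching degeneration near $N^{\mathrm{inv}}$, showing that after localization the count of rigid equivariant strips in $N$ reduces to the count of rigid strips contained entirely in $N^{\mathrm{inv}}$.

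The central obstacle is making this rigorous in the presence of equivariant transversality issues: generic $\tau$-invariant perturbations need not yield transverse moduli spaces of $\tau$-equivariant strips, and the normal linearization of the Cauchy--Riemann operator along $N^{\mathrm{inv}}$ must be handled carefully in characteristic two. This is precisely where the stable normal trivialization enters: it supplies the coherent framings of the relevant index bundles needed to define the operators $\delta_k$ consistently, and, more importantly, it gives a global equivariant model for the normal bundle of $N^{\mathrm{inv}}$ in which the Smith-style cancellation of non-fixed contributions can be carried out uniformly. I expect the proof of the localization isomorphism after inverting $\theta$ to occupy the bulk of the work; the spectral sequence itself, once the equivariant complex is in place, falls out essentially formally.
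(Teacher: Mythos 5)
This statement is not proved in the paper at all: it is quoted verbatim from Seidel--Smith \cite[Theorem 20]{MR2739000}, so the only meaningful comparison is with their original argument. The first half of your sketch -- an equivariant complex built from Floer data parameterized by $S^\infty$ with its free antipodal action, a differential $\partial+\theta\delta_1+\theta^2\delta_2+\cdots$ on $\mathit{CF}(L_0,L_1)\otimes\mathbb Z_2[[\theta]]$, and the $\theta$-adic filtration giving $E^1=\mathit{HF}(L_0,L_1)\otimes\mathbb Z_2[[\theta]]$ -- is indeed the standard Borel-type construction and is consistent with how the equivariant theory is set up there; this part is essentially formal, as you say, and does not use the stable normal trivialization.

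The gap is in the localization step, which is the actual content of the theorem. First, the equivariant complex does not split into summands indexed by $\tau$-orbits of intersection points: the operators $\delta_k$ mix invariant and non-invariant generators in all $\theta$-degrees, so "each free pair contributes an acyclic summand after inverting $\theta$" is not a statement you can make at this level; at best one has a filtration argument, and making it work is nontrivial. Second, and more seriously, even granting that the localized theory is supported on the invariant intersection points, the induced differential counts (equivariant, parameterized) strips in $N$, and the strips relevant after localization lie in $N^{\operatorname{inv}}$ already -- so neck-stretching or adiabatic degeneration "near $N^{\operatorname{inv}}$" does not address the real problem, which is the normal linearization: a strip regular inside $N^{\operatorname{inv}}$ need not be regular in $N$, its normal obstruction bundle is not controlled by generic $\tau$-invariant perturbations (equivariant transversality genuinely fails here), and without control of normal Maslov indices there is no reason the localized differential agrees with the differential of $\mathit{CF}(L_0^{\operatorname{inv}},L_1^{\operatorname{inv}})$. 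This is precisely what the stable normal trivialization is for, and in Seidel--Smith's proof it is used constructively: the homotopies $\Lambda_0,\Lambda_1$ are exponentiated to actual deformations of the Lagrangians putting the normal data into a standard polarized form near the fixed locus, and the map $\Lambda$ is built from explicit comparison maps whose composites are identified with multiplication by powers of $\theta$, rather than by a formal Smith-style cancellation. Your proposal names the right hypothesis but defers its use ("coherent framings of index bundles") to exactly the step that constitutes the theorem, so as written it is a plausible outline of the easy half plus an assertion of the hard half.
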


The $E^{\infty}$ page of the spectral sequence is the Borel or equivariant cohomology of $(N,L_0,L_1)$ together with the involution $\tau$. In our particular case, we consider the manifold $N = M^4=M\times M^{-}\times M\times M^{-}$ with the symplectic involution $\tau(a,b,c,d)=(c,d,a,b)$. Our Lagrangians and invariant sets are as follows.
\begin{align*}
M^4 &= \{(a,b,c,d): a,b,c,d \in M\} &&& (M^4)^{\mathrm{inv}}&= \iota(M \times M^{-})=\{(a,b,a,b): a,b \in M\}\\
\Gamma_{\phi}\Gamma_{\phi} &= \{(a,\phi(a),b,\phi(b): a,b \in M\}&&&  \Gamma_{\phi}\Gamma_{\phi}^{\mathrm{inv}} &= \iota(\Gamma_{\phi}) = \{(a,\phi(a), a, \phi(a)): a\in M\}\\
\Delta\Delta &= \{(a,b,b,a): a,b \in M\} &&& \Delta\Delta^{\mathrm{inv}} &= \iota(\Delta) = \{(a,a,a,a): a \in M\}
\end{align*}

Here $\iota$ is the natural diagonal symplectic embedding $(M \times M^-, 2\omega \oplus \omega^{-})\hookrightarrow M^4$. In Section 2 we will check that the symplectic conditions of Theorem \ref{thm:localization} are met, and in Section 3 we will check that $(M^4,\Gamma_{\phi}\Gamma_{\phi},\Delta\Delta)$ has a stable normal trivialization. Once we have satisfied ourselves that all these hypotheses are met, we see that Proposition \ref{propn:isomorphism} and Theorem \ref{thm:localization} together imply the spectral sequence 
\begin{align*}
\mathit{HF}(\phi^2) \otimes \mathbb Z_2((\theta)) \simeq \mathit{HF}(\Gamma_{\phi}\Gamma_{\phi}, \Delta\Delta)\otimes \mathbb Z_2((\theta)) \rightrightarrows \mathit{HF}(\Gamma_{\phi},\Delta)\otimes \mathbb Z_2((\theta)) \simeq \mathit{HF}(\phi) \otimes \mathbb Z_2((\theta))
\end{align*}
\noindent described in Theorem \ref{thm:main}.

This paper is organized as follows. In Section 2 we briefly discuss the definition of the Lagrangian Floer cohomology of a symplectomorphism, paying particular attention to issues arising from noncompact Lagrangians, check that the symplectic conditions of Theorem \ref{thm:localization} are met, and give a proof of Proposition \ref{propn:isomorphism}. In Section 3 we introduce Seidel and Smith's localization theorem in more detail and show that our setup satisfies their triviality conditions. We conclude with a remark about the broader context of Proposition \ref{propn:isomorphism} and possible future directions for research.

\section{Exact conical Lagrangians and $\mathit{HF}(\phi)$} \label{Section:Floer Cohomology}

Floer cohomology is an invariant for Lagrangian submanifolds in a symplectic manifold introduced by Floer \cite{MR965228, MR933228, MR948771}. Many versions of the theory exist; in this section, we review some of the setup of Lagrangian Floer cohomology for manifolds which are isomorphic to the symplectization of a contact manifold near infinity, and discuss the reasons that $\mathit{HF}(\phi)$ is well-defined for the maps $\phi$ we consider in Theorem \ref{thm:main}. (We do not, however, define the Floer differential.)

\subsection{Floer cohomology for conical Lagrangians}

We will work exclusively with Lagrangians that are \textit{conical} in the following sense. Let $(N, \omega, \lambda, J)$ be an exact symplectic manifold with $\omega = d\lambda$ the symplectic form and $J$ an $\omega$-compatible almost complex structure on $N$. Suppose that $N$ is convex at infinity and of finite type; that is, suppose that $N$ admits an exhausting function $f\co N \rightarrow [0,\infty)$ with $\lambda = -d^{\mathbb C} f$ and $\omega = -dd^{\mathbb C}f$, and there exists some $C>0$ such that all critical points of $f$ occur in $N_{<(C-1)}=f^{-1}([0,C-1))$. Then $N_{C} = f^{-1}(C)$ is a contact manifold with contact form $\alpha_C = \lambda|_{N_C}$ and $N_{\geq C}$ is symplectomorphic to the symplectization of $N_C$. Let $Z_{f}$ be the Liouville vector field and $\kappa\co[0,\infty)\times N_C \rightarrow N$ its flow. 

\begin{definition}
Let $L \subset N$ be a Lagrangian submanifold such that $L\cap N_C$ is a Legendrian submanifold of $N_C$. Then $L$ is said to be \emph{conical} if $\kappa([0, \infty)\times (L\cap N_C)) = L_{\geq C}$, where $L_{\geq C} = L \cap N_{\geq C}$.
\end{definition}

Let $L_0,L_1$ be two conical Lagrangians in $N$ with compact intersection. (For convenience, we increase $C$ until $L_0 \cap L_1 \subseteq N_{<(C-1)}$). The first step in defining the Lagrangian Floer cohomology of $L_0$ and $L_1$ is to show that $L_0$ and $L_1$ can be deformed via Lagrangian isotopy supported on $N_{\leq C}$ to have transverse intersection. For this result, and several following, we will use Khovanov and Seidel's exposition of the Floer cohomology of exact Lagrangians with Legendrian boundary in a symplectic manifold with contact type boundary \cite{MR1862802}. We now introduce their setup.

Suppose that $(V,\partial V)$ is an exact symplectic manifold with contact type boundary. Let $\omega = d\lambda$ be the symplectic form on $V$ and $\alpha = \lambda|_{\partial V}$ be the contact form. Let $A_0,A_1$ be two exact Lagrangian submanifolds such that $\Lambda_i = A_i \cap \partial V$ is Legendrian for $i=0,1$ and $\Lambda_0 \cap \Lambda_1 = \emptyset$. Let $Z$ be the outward-pointing Liouville vector field on $\partial V$, and $\kappa \co (-r,0] \times \partial V \rightarrow V$ be the negative time flow of $Z$. Let $R$ be the Reeb vector field on $\partial V$.

\begin{definition}
We say that $A_i$ is \emph{$\kappa$-compatible} if there is some $\epsilon>0$ such that $\kappa^{-1}(A_i) \cap ([-\epsilon,0] \times \partial V) = [-\epsilon,0]\times \Lambda_i$.
\end{definition}

Of course, the case we are interested in is $(V, \partial V) = (N_{\leq C}, N_C)$, and $A_i = L_i \cap N_{\leq C}$ for $i=0,1$. We have the following isotopy lemmas.

\begin{lemma} \label{Lemma:Isotopy} \cite[Lemma 5.2]{MR1862802}(a) Any Lagrangian submanifold $A_i$ of $(V,\partial V)$ with Legendrian boundary can be deformed, rel $\partial V$, into a $\kappa$-compatible Lagrangian.\\
(b) Let $(A_t)_{0\leq t \leq 1}$ be a Lagrangian isotopy such that $A_0$ and $A_1$ are $\kappa$-compatible and $A_t\cap \partial V$ is Legendrian for all $t$. Then there is an isotopy $(A_t')_{0\leq t \leq 1}$ of $\kappa$-compatible Lagrangians with Legendrian boundary with the same endpoints such that $A_t \cap \partial V = A_{t'} \cap \partial V$ for all $t$. If $(A_t)$ is exact then $(A_{t'})$ may also be chosen to be exact.
\end{lemma}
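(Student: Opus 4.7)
The plan is to prove both parts by local analysis in a collar neighborhood $\kappa\co (-\epsilon, 0] \times \partial V \hookrightarrow V$, where the Liouville form is $\lambda = e^t \alpha$ and the $\kappa$-compatible model for the Legendrian $\Lambda_i \subset \partial V$ is the half-cylinder $L^{\mathrm{std}}_i = (-\epsilon, 0] \times \Lambda_i$. The key technical tool is a Weinstein-type neighborhood theorem adapted to this boundary setting: a small neighborhood of $L^{\mathrm{std}}_i$ inside the collar is symplectomorphic to a neighborhood of the zero section in $T^*L^{\mathrm{std}}_i$, compatibly with the boundary hypersurface $\partial V$.

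For part (a), I would first perform, if necessary, a preliminary Hamiltonian isotopy supported near $\Lambda_i$ but fixing $\partial V$ pointwise, in order to arrange that $A_i$ and $L^{\mathrm{std}}_i$ have identical tangent planes along $\Lambda_i$ (these tangent planes lie in a one-parameter family of Lagrangian subspaces of $TV$ containing $T\Lambda_i$, so this is a finite-codimension adjustment). After this, $A_i$ appears in the Weinstein chart as the graph of a closed 1-form on $L^{\mathrm{std}}_i$, which by exactness has the form $dh$; the Legendrian boundary condition forces $h|_{\{0\}\times \Lambda_i}$ to be constant, which I normalize to zero. Picking a cutoff $\chi\co (-\epsilon, 0] \to [0,1]$ equal to $1$ near $t = 0$ and vanishing near $t = -\epsilon$, the family
\[
A_i^s = \operatorname{graph}\bigl(d((1-s)\chi(t) h)\bigr), \quad 0 \le s \le 1,
\]
glued with the unchanged $A_i$ outside, is a Lagrangian isotopy. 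Because $h$ vanishes on $\{0\}\times \Lambda_i$, the isotopy is rel $\partial V$; each member is the graph of an exact 1-form, so the family is exact; and at $s = 1$ the Lagrangian agrees with $L^{\mathrm{std}}_i$ near the boundary, hence is $\kappa$-compatible.

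For part (b), I would run the same construction parametrically in $t$. A single Weinstein chart and cutoff function can be chosen to work simultaneously for every $A_t$, so that the defining functions $h_t$ form a continuous family; the hypothesis that $A_t \cap \partial V$ remains Legendrian for every $t$ ensures the graph description stays valid throughout. Since $A_0$ and $A_1$ are already $\kappa$-compatible, their associated $h_0, h_1$ vanish identically, so the modified isotopy $(A_t')$ shares the endpoints of $(A_t)$. Exactness of $(A_t)$ lets one pick primitives $h_t$ continuously in $t$, producing the desired exact $\kappa$-compatible isotopy.

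The hardest step is the preliminary tangent-jet adjustment along $\Lambda_i$: constructing a Hamiltonian isotopy that fixes $\partial V$ pointwise but rotates $TA_i|_{\Lambda_i}$ into $TL^{\mathrm{std}}_i|_{\Lambda_i}$ requires care, because the generating Hamiltonian must vanish on $\partial V$ while still acting nontrivially on the normal directions that shape the Lagrangian tangent plane. Once this first-order matching is in hand, the Weinstein-chart-plus-cutoff construction is a routine linear interpolation that uniformly handles exactness and the rel-$\partial V$ condition across the isotopy parameter.
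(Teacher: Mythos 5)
This lemma is not proved in the paper at all: it is quoted verbatim from Khovanov--Seidel \cite[Lemma 5.2]{MR1862802}, so the only comparison available is with their argument, and your collar-plus-Weinstein-chart-plus-cutoff strategy is indeed the natural route. However, as written your construction has concrete problems. First, the interpolation formula is wrong: with $\chi\equiv 1$ near $t=0$ and $\chi\equiv 0$ near $t=-\epsilon$, the family $\operatorname{graph}\bigl(d((1-s)\chi h)\bigr)$ neither starts at $A_i$ (at $s=0$ it is $\operatorname{graph}(d(\chi h))$, which is the zero section near $t=-\epsilon$) nor glues with the unchanged $A_i$ outside the collar for any $s$, unless $h$ already vanishes there. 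What you want is $h_s=(1-s\chi(t))h$, which equals $h$ near $t=-\epsilon$ for all $s$ (so it glues and starts at $A_i$) and vanishes near $t=0$ at $s=1$ (so the result is $\kappa$-compatible). Second, part (a) makes no exactness assumption, so ``by exactness $\theta=dh$'' is not available; it is also not needed, since the closed form $\theta$ vanishes pointwise along $\{0\}\times\Lambda_i$ (because $\partial A_i=\Lambda_i$) and the collar retracts to $\Lambda_i$, so $\theta$ is exact there. Third, your ``hardest step,'' the preliminary rotation of $TA_i|_{\Lambda_i}$ by a Hamiltonian isotopy fixing $\partial V$, is asserted rather than proved. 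Note that along $\Lambda_i$ the possible tangent planes are $T\Lambda_i\oplus\operatorname{span}(aR+bZ)$; if $A_i$ meets $\partial V$ transversely ($b\neq 0$), such a plane is automatically transverse to the cotangent fibres of the chart around the cone, so $A_i$ is already graphical near $\Lambda_i$ and no first-order matching is needed (the pointwise vanishing of $dh$ along $\Lambda_i$ already makes the cutoff isotopy rel $\partial V$). If tangential contact ($b=0$) is allowed, this step is genuine unproven content, and you must also check the rotated Lagrangian stays in $\{t\le 0\}$.

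For part (b) there is a more substantive gap: the Legendrian boundaries $\Lambda_t=A_t\cap\partial V$ move with $t$, so a single Weinstein chart around one fixed cone $(-\epsilon,0]\times\Lambda$ cannot make every $A_t$ graphical; you need a $t$-parametrized family of charts around the moving cones, e.g.\ by extending the Legendrian isotopy to an ambient contact isotopy of $\partial V$ and symplectizing, and only then running the cutoff fibrewise in $t$. Relatedly, $\kappa$-compatibility of $A_0$ and $A_1$ gives $h_0,h_1\equiv 0$ only \emph{near} $\partial V$, not identically, so the cutoff must be supported inside that region to preserve the endpoints. Finally, ``exactness of $(A_t)$ lets one pick primitives $h_t$ continuously'' conflates exactness of the Lagrangians with exactness of the isotopy; to get the last sentence of the lemma you must verify that the modified family $(A_t')$ is an exact isotopy (e.g.\ by tracking a global primitive or the generating Hamiltonians through the cutoff), which your sketch does not do.
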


\begin{lemma}\cite[Lemma 5.6]{MR1862802} Let $A_0$ and $A_1$ be exact Lagrangian submanifolds of $(V, \partial V)$ such that $\partial A_0 \cap \partial A_1 = \emptyset$. There are Lagrangian submanifolds $A_0'$ and $A_1'$ such that for $i=0,1$,  $A_i$ is joined to $A_i'$ by an exact isotopy rel $\partial V$, each $A_i'$ is $\kappa$-compatible, and $A_0' \pitchfork A_1'$.\end{lemma}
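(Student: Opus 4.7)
The plan is a two-stage perturbation: first make each of $A_0$ and $A_1$ individually $\kappa$-compatible using Lemma \ref{Lemma:Isotopy}(a), and then use a generic Hamiltonian isotopy supported away from $\partial V$ to achieve transversality in the interior.

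For the first stage, applying Lemma \ref{Lemma:Isotopy}(a) separately to $A_0$ and $A_1$ produces $\kappa$-compatible Lagrangians $\tilde{A}_0, \tilde{A}_1$ joined to the originals by Lagrangian isotopies rel $\partial V$; these can be arranged to be exact via the standard observation that a compactly supported Lagrangian isotopy of an exact Lagrangian can be chosen Hamiltonian. Shrinking the collar if necessary, I may assume a common parameter $\epsilon>0$ for which $\tilde{A}_i \cap \kappa([-\epsilon,0]\times\partial V) = \kappa([-\epsilon,0]\times \Lambda_i)$ for $i=0,1$. Since $\Lambda_0\cap\Lambda_1=\emptyset$ by hypothesis, the Lagrangians $\tilde{A}_0$ and $\tilde{A}_1$ are disjoint on this collar, so the compact intersection $\tilde{A}_0\cap\tilde{A}_1$ lies in the region $K := V\setminus \kappa((-\epsilon,0]\times\partial V)$.

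For the second stage, select a smooth cutoff $\rho\co V\to[0,1]$ equal to $1$ on $K$ and vanishing on $\kappa((-\epsilon/2, 0]\times\partial V)$, and let $H = \rho H_0$ for a generic $C^\infty$-small function $H_0\co V\to\mathbb R$. The time-$1$ Hamiltonian flow $\phi_H^1$ is the identity on a neighborhood of $\partial V$, so $A_0' := \phi_H^1(\tilde{A}_0)$ remains $\kappa$-compatible, and the isotopy $t\mapsto \phi_H^t(\tilde{A}_0)$ is automatically exact. Setting $A_1' := \tilde{A}_1$ and concatenating with the isotopies from the first stage produces the required exact isotopies rel $\partial V$ from $A_0$ to $A_0'$ and from $A_1$ to $A_1'$.

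The principal obstacle is to verify that for generic cutoff $H$ the Lagrangians $A_0'$ and $A_1'$ are actually transverse. By the Weinstein tubular neighborhood theorem, $C^\infty$-small Hamiltonian perturbations of $\tilde{A}_0$ are parametrized by graphs of exact $1$-forms $df$ on $\tilde{A}_0$, and a standard Baire-category argument makes the graph transverse to $\tilde{A}_1$ for generic $f$. The subtle point is whether the cutoff restriction leaves enough freedom, but since $\tilde{A}_0\cap\tilde{A}_1\subset K$ lies entirely in the region where $\rho\equiv 1$, the cutoff imposes no constraint on perturbations in a neighborhood of the intersection, and the usual transversality argument applies verbatim.
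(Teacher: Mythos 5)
Your two-stage argument—first straighten both Lagrangians into $\kappa$-compatible position via Lemma \ref{Lemma:Isotopy}(a), then achieve transversality by a generic Hamiltonian perturbation cut off near $\partial V$, observing that all intersections lie in the region where the cutoff is $1$—is essentially the proof of the cited result (the paper itself does not prove this lemma but quotes it from Khovanov--Seidel, whose argument is exactly this combination of their straightening lemma with a small interior Hamiltonian perturbation). The one soft spot is your appeal to the claim that ``a compactly supported Lagrangian isotopy of an exact Lagrangian can be chosen Hamiltonian,'' which as stated overlooks the flux obstruction; what is actually needed, and what the explicit collar-straightening construction provides, is that the first-stage isotopy can itself be taken exact (equivalently, that it passes through exact Lagrangians), after which the second stage is genuinely Hamiltonian and the concatenation is exact rel $\partial V$ as required.
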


This gives us the desired transverse intersection result, as follows:  If $L_0$, $L_1$ are two exact Lagrangian submanifolds of $N$ with intersection contained in $N_{<C}$ which are conical outside of $N_{C}$, then their restrictions to the symplectic manifold with boundary $(N_{\leq C}, \partial N_{\leq C}) = (V, \partial V)$ are two $\kappa$-compatible Lagrangians with Legendrian boundary. (This relies on the fact that all critical points of $f$ occur on $N_{< (C-1)}$, and not arbitrarily close to $N_C$.) This implies that $(L_0\cap V) = A_0$ and $(L_1 \cap V) = A_1$ can be deformed by a $\kappa$-compatible exact Lagrangian isotopy to intersect transversely, without changing $L_i\cap(\partial V) = (L_i)_C$ for $i=0,1$. By $\kappa$-compatibility, these isotopies can be regarded as exact Lagrangian isotopies of $L_0$ and $L_1$ to $L_0'$ and $L_1'$ which preserve $(L_i)_{\geq C}$. Therefore $L_0'\pitchfork L_1'$ is contained in $N_{<C}$.

Once this deformation has been accomplished, we can say a few words about the definition of the Floer cohomology of $L_0$ and $L_1$. The Floer cochain complex is $\mathit{CF}(L_0,L_1) = \mathbb Z_2\langle L_0' \cap L_1' \rangle$. Let ${\bf J}$ be a family of complex structures which perturbs $J$ and respects the structure of $N$ as a symplectization near infinity. The Floer differential counts finite energy strips $u:  \mathbb R \times [0,1]$ which are solutions to Floer's equations
\begin{align*}
\frac{\partial u}{\partial s} + J_t(u) \frac{\partial u}{\partial t} = 0 & & u(\mathbb R \times \{0\}) \subset L_0' & & u(\mathbb R \times \{1\}) \subset L_1' & &\lim_{s \rightarrow \infty} u(s,t) = x & & \lim_{s\rightarrow -\infty} u(s,t) =y
\end{align*}
\noindent with respect to ${\bf J}$, up to reparametrization. (Here $x,y \in L_0' \cap L_1'$.) We let $\mathcal M({\bf J})$ be the space of such curves for some ${\bf J}$ which is regular, that is, such that the operator $D_{\bf J}u$ arising which gives a linearization of Floer's equations is surjective for all $u \in \mathcal({\bf J})$. Khovanov and Seidel show that for $\kappa$-compatible Lagrangians $A_0', A_1'$ in $(V, \partial V)$ with finite intersection, there is a maximum modulus principle \cite[Lemma 5.5]{MR1862802} implying that all $u \in \mathcal M({\bf J})$ lie in a compact set contained in $V \backslash \partial V$. After this is established, standard arguments show the Floer differential, and thus $\mathit{HF}(A_0,A_1)$, is well-defined. The same follows for $\mathit{HF}(L_0,L_1)$. (Through a slight abuse of notation, we will sometimes say $\mathit{HF}(L_0,L_1) = \mathit{HF}(A_0,A_1)$, since the two theories have the same generators and pseudoholomorphic curves.) Furthermore, one can show this is invariant of the choices involved. In particular, we have the following.

\begin{lemma} \cite[Proposition 5.10]{MR1862802} $\mathit{HF}(L_0,L_1)$ is invariant under exact Lagrangian isotopy supported on $N_{< C}$.
\end{lemma}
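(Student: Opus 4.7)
The plan is to follow the standard Floer-theoretic continuation argument, adapted to the conical setup already established in this section. Let $\{L_0^t\}_{t\in[0,1]}$ be an exact Lagrangian isotopy supported in $N_{<C}$, with $L_0^0 = L_0$. Since the isotopy is stationary outside a compact subset of $N_{<C}$, every $L_0^t$ is $\kappa$-compatible and conical, agreeing with $L_0$ near infinity. Because the isotopy is exact and compactly supported, a standard argument lets me extend it to a compactly supported Hamiltonian isotopy $\psi_t$ of $N$ with $\psi_t = \mathrm{id}$ outside $N_{<C}$, so it suffices to prove invariance when $L_0^1 = \psi_1(L_0)$ for such a Hamiltonian isotopy. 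After an arbitrarily small further $\kappa$-compatible perturbation, I may assume $L_0^1 \pitchfork L_1$ inside $N_{<C}$, so that $\mathit{CF}(L_0^1, L_1)$ is defined.

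Next I would construct a continuation map $\Phi \co \mathit{CF}(L_0, L_1)\rightarrow \mathit{CF}(L_0^1, L_1)$ in the usual way. Fix a monotone cutoff $\rho \co \mathbb{R}\rightarrow [0,1]$ equal to $0$ for $s\ll 0$ and to $1$ for $s\gg 0$, and choose an $s$-dependent family $\mathbf{J}_s$ of almost complex structures interpolating between regular choices at the two ends, each respecting the symplectization structure outside $N_{<C}$. Define $\Phi$ by counting finite-energy solutions to Floer's equation with moving boundary condition $u(s,0)\in L_0^{\rho(s)}$ and fixed boundary condition $u(s,1)\in L_1$, connecting intersection points of $L_0\cap L_1$ at $s=-\infty$ to intersection points of $L_0^1\cap L_1$ at $s=+\infty$.

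The main analytic step, and the place where the conical hypothesis really matters, is a maximum principle showing that every such continuation strip is contained in $N_{<C}$. Since $L_0^{\rho(s)}$ agrees with $L_0$ near $N_C$ for all $s$ and the family $\mathbf{J}_s$ is of contact type at infinity, the same plurisubharmonicity argument used in \cite[Lemma 5.5]{MR1862802} applies verbatim to the continuation equation: any interior maximum of $f\circ u$ attaining a value $\geq C$ would violate the subharmonicity of $f\circ u$, and the boundary conditions also prevent a boundary maximum. With this in hand, standard Gromov compactness and transversality for $\mathbf{J}_s$ produce a well-defined chain map.

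Finally, I would argue that $\Phi$ is a chain homotopy equivalence. Running the same construction with the reversed isotopy yields a map $\Psi \co \mathit{CF}(L_0^1, L_1)\rightarrow \mathit{CF}(L_0, L_1)$, and the standard homotopy-of-homotopies (gluing) argument shows that $\Psi\circ \Phi$ and $\Phi\circ \Psi$ are chain-homotopic to the identity via a one-parameter family of continuation data joining the concatenated homotopy to a constant one. The maximum principle continues to hold uniformly in this auxiliary parameter because all data involved remain conical at infinity, so no new analytic difficulty arises. I expect the only genuine obstacle is bookkeeping the maximum principle uniformly across these parameter families; the algebraic chain-homotopy framework is completely standard once that is in place.
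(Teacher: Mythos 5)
Your argument is correct, but note that the paper does not prove this statement at all: it is quoted directly from Khovanov--Seidel \cite[Proposition 5.10]{MR1862802}, so there is no in-paper proof to compare against. Your route --- embed the compactly supported exact isotopy in a compactly supported ambient Hamiltonian isotopy, define continuation maps with moving boundary conditions, control them with the plurisubharmonic maximum principle of \cite[Lemma 5.5]{MR1862802} (which applies since the moving Lagrangians are constant and conical near $N_C$), and conclude by the usual homotopy-of-homotopies argument --- is the standard proof and is essentially the argument of the cited source, so no gap.
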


In fact for the next section we will require a slightly stronger invariance result that allows for isotopies that move $\partial A_0$.

\begin{lemma} \label{lemma:s-invariance} \cite[Proposition 5.11]{MR1862802}  Let $(A_0)_{0 \leq s \leq 1}$ be an isotopy of exact Lagrangians with Legendrian boundary, and $A_1$ an exact Lagrangian with Legendrian boundary such that $\partial A_{0,s} \cap \partial A_{1} = \emptyset$ for all $s$. Then $\mathit{HF}(A_{0,s},A_1)$ is independent of $s$ up to isomorphism.\end{lemma}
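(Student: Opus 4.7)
The plan is to construct a continuation map $\mathit{CF}(A_{0,0}, A_1) \to \mathit{CF}(A_{0,1}, A_1)$ from the isotopy and show it descends to an isomorphism on cohomology; applying the same construction to the reverse isotopy $s \mapsto A_{0,1-s}$ produces a two-sided inverse up to chain homotopy.

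First I would invoke Lemma \ref{Lemma:Isotopy}(b) to replace $(A_{0,s})_{0 \leq s \leq 1}$ by a $\kappa$-compatible isotopy with the same endpoints, keeping the boundary Legendrians $\partial A_{0,s}$ disjoint from $\partial A_1$ throughout. This ensures a uniform conical structure in some collar of $\partial V$. Following the standard recipe, I would then pick a smooth monotone cutoff $\rho \co \mathbb R \to [0,1]$ with $\rho(\sigma) = 0$ for $\sigma \ll 0$ and $\rho(\sigma) = 1$ for $\sigma \gg 0$, together with a $\sigma$-dependent generic family $\mathbf{J}_\sigma$ of $\omega$-compatible almost complex structures that respect the symplectization structure near infinity. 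The continuation cochain map would count rigid finite-energy strips $u\co \mathbb R \times [0,1] \to V$ satisfying Floer's equation for $\mathbf{J}_\sigma$ with the moving boundary condition $u(\sigma, 0) \in A_{0,\rho(\sigma)}$, the fixed condition $u(\sigma, 1) \in A_1$, and asymptotics in $A_{0,0} \cap A_1$ and $A_{0,1} \cap A_1$.

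The main obstacle is compactness: because the boundary Legendrians $\partial A_{0,\rho(\sigma)}$ move along $\partial V$ as $\sigma$ varies, one must rule out parametrized strips escaping to infinity. For each fixed $\sigma$ the pair $(A_{0,\rho(\sigma)}, A_1)$ is $\kappa$-compatible with disjoint Legendrian boundary, so the maximum modulus argument of Khovanov--Seidel \cite[Lemma 5.5]{MR1862802} prevents a plurisubharmonic function $f\circ u$ from attaining an interior maximum at a point of $\partial(\mathbb R \times [0,1])$ mapped into $\partial V$. Since this estimate is pointwise in $\sigma$ and the isotopy is uniformly conical outside a compact set, the bound is uniform in $\sigma$ and the image of $u$ remains in a fixed compact subset of $V \setminus \partial V$. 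With compactness secured, the usual Gromov compactness, transversality for generic $\mathbf{J}_\sigma$, and gluing arguments yield a well-defined chain map of degree zero.

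Finally, the standard homotopy-of-homotopies argument shows that the continuation map is independent, up to chain homotopy, of the choice of $\rho$ and $\mathbf{J}_\sigma$. Concatenating the continuation map for the isotopy with the continuation map for its reverse produces a chain map chain-homotopic to the continuation map of the constant isotopy, which is the identity on cohomology. Hence the continuation map induces an isomorphism $\mathit{HF}(A_{0,0}, A_1) \cong \mathit{HF}(A_{0,1}, A_1)$, and more generally $\mathit{HF}(A_{0,s}, A_1)$ is independent of $s$ up to isomorphism.
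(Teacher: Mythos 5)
The step that fails is the compactness claim, i.e.\ the assertion that the confinement estimate of \cite[Lemma 5.5]{MR1862802} applies ``pointwise in $\sigma$'' to the continuation strips. That lemma is proved for strips whose boundary conditions are two \emph{fixed} $\kappa$-compatible Lagrangians; for your parametrized problem the $t=0$ boundary condition depends on $\sigma$, and the $\sigma$-derivative of the boundary condition enters the boundary computation. Concretely, with $\lambda=-d^{\mathbb C}f$ and Floer's equation one gets, at a point of the $t=0$ edge lying in the conical region, $\partial_t(f\circ u)(\sigma,0)=-\lambda\bigl(\partial_\sigma(u(\sigma,0))\bigr)$; writing $\partial_\sigma(u(\sigma,0))=\rho'(\sigma)X_{\rho(\sigma)}(u)+v$ with $v$ tangent to $A_{0,\rho(\sigma)}$ and using $\lambda|_{TA_{0,s}}=0$ on the cone, this equals $-\rho'(\sigma)\,\lambda\bigl(X_{\rho(\sigma)}(u)\bigr)$. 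The term $\lambda(X_s)$ is the contact Hamiltonian generating the Legendrian motion of $\partial A_{0,s}$ and has no definite sign, so the Neumann-type condition underlying the Hopf-lemma argument is lost and a boundary maximum of $f\circ u$ cannot be excluded. This is not a technicality: if the pointwise argument were valid, continuation strips for wrapping-type isotopies could never enter the conical end, which is false in general; the hypothesis $\partial A_{0,s}\cap\partial A_1=\emptyset$ must be used in a genuinely quantitative way, and your sketch never does so beyond invoking the fixed-boundary lemma. (A smaller omission, easily repaired in the exact setting, is the a priori energy bound for the moving-boundary continuation strips, which acquires an extra term from the variation of the primitives along the isotopy.)

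For comparison, the paper does not reprove this statement; it quotes \cite[Proposition 5.11]{MR1862802}, whose argument avoids moving-boundary continuation maps altogether: after making the isotopy $\kappa$-compatible by Lemma~\ref{Lemma:Isotopy}(b), the Legendrian isotopy $\partial A_{0,s}$ is extended (Legendrian isotopy extension) to a contact isotopy of $\partial V$ supported away from $\partial A_1$, lifted to a Hamiltonian isotopy of $V$ that is conical near $\partial V$ and fixes a neighborhood of $\partial A_1$; one then reduces to invariance under exact isotopies with fixed Legendrian boundary (where the maximum principle is unproblematic) together with naturality of $\mathit{HF}$ under such ambient symplectomorphisms. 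If you want to keep your continuation-map route, you must either arrange the interpolation so that near $\partial V$ the boundary motion is generated by a contact Hamiltonian of controlled sign and prove a corresponding ``no escape'' lemma, or derive a $C^0$ bound from the absence of intersections of the boundaries (an action--energy argument); as written, the key compactness step is a gap.
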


%The intended application of this result is for showing that small positive pushoffs by the Reeb flow do not change $\mathit{HF}(A_0,A_1)$, and indeed, that is how we will apply it in the next subsection.

\subsection{The diagonal, the graph, and other important Lagrangians}

In this section, we show the Lagrangians we are interested in are indeed exact and conical, and discuss how to deform them to have compact intersection.

As at the beginning of this section, let $(M, \omega, \lambda, J)$ be an exact symplectic manifold with $\omega = d\lambda$ the symplectic form and $J$ an $\omega$-compatible almost complex structure on $M$. Suppose that $M$ is convex at infinity and of finite type; that is, suppose that $M$ admits an exhausting function $f\co M \rightarrow [0,\infty)$ with $\lambda = -d^{\mathbb C} f$ and $\omega = -dd^{\mathbb C}f$, and there exists some $c>0$ such that all critical points of $f$ occur in $M_{<(c-1)}=f^{-1}([0,c-1))$. Then as previously, $M_{c}$ is a contact manifold with contact form $\alpha_c = \lambda|_{M_c}$ and $M_{\geq c}$ is symplectomorphic to the symplectization of $M_c$.

Let $\phi \co M \rightarrow M$ be an exact symplectomorphism which, outside a compact set $K$, is either the identity or equal to the symplectization of a contactomorphism $\hat{\phi}$ such that $x \neq \hat{\phi}(x)\neq \hat{\phi}^2(x)$.(For simplicity, we assume $K \subset M_{<(c-1)}$, although once we have made this adjustment we no longer allow $c$ to change). We consider the manifold $N = M \times M^{-}$ with the symplectic form $\omega \oplus (-\omega) = d(\lambda \oplus (-\lambda))$ and complex structure $J \oplus -J$. This manifold admits an exhausting function $f \oplus f$. Notice that $\omega \oplus (-\omega) = -dd^{\mathbb C}(f \oplus f)$, because the complex structure has a different sign on each factor. Moreover, the critical points of $f\oplus f$ lie in $(f \oplus f)^{-1}([0,C-2)) = N_{<C-2}$, where $C=2c$.

Consider the Lagrangians $\Delta = \{(a,a): a \in M\}$ and $\Gamma_{\phi} = \{(a, \phi(a)): a \in M\}$. We see that the restriction of $\lambda \oplus (-\lambda)$ to $\Delta$ is identically zero, so $\Delta$ is exact. Moreover, since $\phi$ is an exact symplectomorphism, $\phi_*\lambda = \lambda + \eta$, with $\eta$ an exact one-form, so if $(v, \phi_*(v)) \in T\Gamma_{\phi}$, then $(\lambda\oplus -\lambda)(v, \phi_*v) = \lambda(v) - \phi^*\lambda(v) = -\eta(v)$, so $\lambda \oplus (-\lambda)$ restricted to $\Gamma_{\phi}$ is exact.

We can easily see that the submanifold $\Delta \subset M\times M^{-}$ is conical. Indeed, since the exhausting function on $M\times M^{-}$ is $f \oplus f$, if $Z_f$ is the Liouville vector field on $M$, then $Z_{f \oplus f} = Z_f\oplus Z_f$ is the Liouville vector field on $M \times M^{-}$. In particular, the Liouville flow on $M\times M^{-}$ is the product of the Liouville flow on each copy of $M$ individually, hence preserves the diagonal. Moreover, since $\lambda \oplus (-\lambda)|_{\Delta} = 0$, the intersection of $\Delta$ with any $M_C$ is Legendrian.

Now we must discuss $\Gamma_{\phi}$. We have two cases. First, suppose $\phi$ is equal to the symplectization of a contactomorphism with no fixed points outside of a compact set. Then for all $d > c$, $M_c$ is identified with $M_d$ under the Liouville flow, and in particular if the time $t$ Liouville flow takes $x \in M_c$ to $y \in M_d$, then by our assumptions that $\phi$ is the symplectization of a contactomorphism, it must take $\phi(x) \in M_C$ to $\phi(y) \in M_d$. Since the Liouville flow on $M\times M^{-}$ is the product of the Liouville flows on each copy of $M$, this implies that $\Gamma_{\phi}$ is preserved by the Liouville flow on $M \times M^{-}$. Furthermore, for any $D>C$, let $D=2d$. Then if $(x, \phi(x)) \in (M \times M^{-})_{D}$, since $f(x) = f(\phi(x))$, we must have $f(x) = f(\phi(x)) = d$. In particular, $(x, \phi(x))\in M_d \times M_d$, implying that if $(v,\phi_*(v))\in T\Gamma_{\phi} \cap T(M\times M^{-})_{D}$, then $(\lambda \oplus -\lambda)|_{(M\times M^{-})|_{D}}(v, \phi_*(v)) = (\alpha_d \oplus -\alpha_d)(v, \phi_*(v)) = \alpha_d(v) - \alpha_d(\phi_*v) = 0$, where the last step follows because $\phi$ preserves the contact form. Ergo $\Gamma_{\phi} \cap (M\times M^{-})_{D}$ is Legendrian.

Moreover, the intersection $\Delta \cap \Gamma_{\phi}$ is contained in $M_{\leq C}$, hence is compact. However, if $\phi$ is equal to the identity outside $M_{\leq c}$, we have $\Gamma_{\phi} = \Delta$ on $M_{\geq c}$. We will need to perturb $\Gamma_{\phi}$ by a Hamiltonian isotopy to make the intersection compact.

We begin by setting up some notation on the manifold $M$. For any $e>c$, notice that if one identifies $M_e$ with $M_c$ via the Liouville flow, the contact form $\alpha_e = \lambda|_{M_e}$ is identified with $\frac{e}{c}\alpha_c$. Let $X_f$ be the Hamiltonian vector field of $f \co M \rightarrow [0, \infty)$. Then on $M_c$, we have $X_f = cR_{\alpha_c}$, and on $M_e$, we have $X_f = eR_{\alpha_e}$. Choose an $s>0$ sufficiently small that $2sc$ is less than the period of all Reeb orbits on the contact manifold $M_c$. (This is always possible since the set of periods of Reeb orbits on a contact manifold attains a positive minimum, cf. \cite[page 109]{MR2797558}.) It follows that $se$ is not the period of any Reed orbit on $M_e$. Therefore the time $s$ flow of $X_f$ on $M_{\geq c}$ has no fixed points.

Now let us construct a suitable perturbation of $\Gamma_{\phi}$. Consider a smooth $h \co \mathbb R \rightarrow \mathbb R$ which is nondecreasing, equal to zero on $(-\infty,c+ \frac{1}{2})$, and equal to $s$ for $e \geq c+\frac{3}{4}$. Then consider a Hamiltonian $H \co M \rightarrow \mathbb R$ which is given by $h(f(x))f(x)$. Let $\psi_1$ be the time one flow of $H$, so that when $e>c+\frac{3}{4}$, on $M_e$ the map $\phi$ is the time $se$ flow of $R_{\alpha_e}$ (or equivalently, the time $s$ flow of $X_f$) and has no fixed points. Since $\psi_1$ and $\phi$ have disjoint support, $\psi_1$ commutes with $\phi$. We replace $\Gamma_{\phi}$ with $\Gamma_{\psi_1 \circ \phi} = \Gamma_{\phi \circ \psi_1}$. This is exact and conical by the same arguments as for $\Gamma_{\phi}$, and we see that $\Delta \cap \Gamma_{\psi_1 \circ \phi} = \emptyset$ on $M_{\geq c+1}$.

\begin{definition}
Let $\phi \co M\rightarrow M$ be an exact symplectomorphism such that $\phi$ is equal to the identity outside of a compact set. We say that the Floer cohomology $\mathit{HF}(\phi)$ of the symplectomorphism $\phi \co M \rightarrow M$ is the Lagrangian Floer cohomology $\mathit{HF}(\Gamma_{\psi_1 \circ \phi}, \Delta)$, for a map $\psi_1$ as chosen above.
\end{definition}

Lemma \ref{lemma:s-invariance} implies that this definition is independent of our choice of $s$ and subsequently of $h$, as long as $s$ is sufficiently small, since any two choices give Lagrangians $\Gamma_{\psi_1 \circ \phi}$ which are related by exact Lagrangian isotopy.

However, observe that our definition does depend on the structure of $M$ as a symplectization. This is equivalent to studying symplectomorphisms on with Liouville domains with contact-type boundary (and passing to their symplectizations where appropriate). However, the formulation given here seems more natural from the point of view of taking products, and is also better-adapted to applying Seidel and Smith's theory.

So far we have talked about the definition of $\mathit{HF}(\phi)$. Let's take a moment to lay some groundwork for the other Lagrangian Floer computation we will be interested in. Consider the manifold $M^4 = M \times M^{-} \times M \times M^{-}$ with plurisubharmonic function $f^{4}\co M^4\rightarrow \mathbb R$ and consequent symplectic form and primitive one-form. Let $C' = 4c$. Then $(M^4)_{\geq C'}$ is a symplectization of the contact manifold $M^4_{C'}$. Consider the Lagrangians $\Delta\Delta = \{(a,b,b,a): a,b \in M\}$ and $\Gamma_{\phi}\Gamma_{\phi} = \{(a,\phi(a),b,\phi(b)): a, b \in M \}$. The Lagrangians $\Delta\Delta$ and $\Gamma_{\phi}\Gamma_{\phi}$ are both products of conical Lagrangians in product symplectic manifolds (in the case of $\Gamma_{\phi}\Gamma_{\phi}$, via grouping the first and fourth factors and the second and third factors), hence conical. Let us consider their intersection. There are two cases.

First, suppose that on $M_{\geq c}$, the map $\phi$ is equal to the symplectization of a contactomorphism $\hat{\phi}$ such that $a \neq \hat{\phi}(a) \neq \hat{\phi}^2(a)$. Then suppose $(a,b,b,a) = (a,\phi(a), b, \phi(b))$ is a point in $\Gamma\Gamma \cap \Delta_{\phi}\Delta{\phi}$ which lies in $(M^4)_{\geq C'}$. Then $2f(a)+2f(b)>C'$, implying that either $f(a)>c$ or $f(b)>c$. Without loss of generality, let $f(a)>c$. But the equality between the two points implies that $\phi(a)=b$ and $\phi(b)=a$, so $\phi^2(a)=a$. This is impossible, because $\phi^2$ has no fixed points on $M_{\geq c}$. We conclude that $\Delta\Delta \cap \Gamma_{\phi}\Gamma_{\phi}$ is contained in $M^4_{\leq C'}$.

Now consider the case the $\phi$ is the identity on $M_{\geq c}$. Unlike $\Delta$ and $\Gamma_{\phi}$ in $M\times M^{-}$, the Lagrangians $\Delta\Delta$ and $\Gamma_{\phi}\Gamma_{\phi}$ are not identical outside of a compact set. However, we claim a very similar deformation $\Gamma_{\phi}\Gamma_{\phi}$ can be used to ensure that the intersection of the two Lagrangian lies in a compact subset of $M^4$, as follows. If $H \co M\rightarrow \mathbb R$ is the Hamiltonian on $M$ defined previously, let $\psi_{\frac{1}{2}}$ be the time $\frac{1}{2}$ flow of $\widetilde{H}$. Then we replace $\Gamma_{\phi}\Gamma_{\phi}$ with $\Gamma_{\psi_{\frac{1}{2}} \circ \phi}\Gamma_{\psi_{\frac{1}{2}}\circ \phi}$.

%As previously, let $\zeta_{f}\co[0,\infty)\times M \rightarrow M$ be the Hamiltonian flow of $f$ on $M$ and let $g\co M\rightarrow [0,s]$ be the smooth function defined 
%\begin{align*}
%\Gamma_{\phi,\frac{s}{2}}\Gamma_{\phi,\frac{s}{2}} =  \left\lbrace \left(\zeta_f\left(\frac{1}{2}g(a),a\right), \zeta_f\left(-\frac{1}{2}g(a),\phi(a)\right), \zeta_f\left(\frac{1}{2}g(b),b\right), \zeta_f\left(-\frac{1}{2}g(b),\phi(b)\right)\right) : a,b \in M\right\rbrace. 
%\end{align*}
%\noindent 

We claim that $\Delta\Delta$ and $\Gamma_{\psi_{\frac{1}{2}}\circ \phi}\Gamma_{\psi_{\frac{1}{2}}\circ \phi}$ do not intersect outside of $M_{< C'+4}$. For suppose that there is some $(a,b,b,a) \in (\Delta\Delta \cap \Gamma_{\psi_{\frac{1}{2}} \circ \phi},\Gamma_{\psi_{\frac{1}{2}} \circ \phi}) \cap M^4_{\geq C'+4}$. This implies that both $a$ and $b$ are fixed points of $(\psi_{\frac{1}{2}}\circ \phi)^2 = \psi_1 \circ \phi^2$. However, we know that $2(f(a)+f(b))>C'+4$, so at least one of $f(a)$ and $f(b)$ is greater than $c+1$. Without loss of generality, say it is $a$. Then $\phi(a)=a$, so $\psi_1(a) = a$. But this is impossible, since $\psi_1$ has no fixed points on $M_{>c+1}$. Therefore $\Gamma_{\psi_{\frac{1}{2}} \circ \phi,\frac{s}{2}}\Gamma_{\psi_{\frac{1}{2}}\circ \phi} \cap \Delta\Delta$ is contained in $M_{\leq C'+4}$, hence is compact. By the same arguments as previously, $\mathit{HF}(\Gamma_{\psi_{\frac{1}{2}} \circ \phi}\Gamma_{\psi_{\frac{1}{2}}\circ \phi}, \Delta\Delta)$ is well-defined.

\begin{remark} \label{rmk:cases} At this point we pause for a remark about our perturbations of $\Gamma_{\phi}$. Invariance under $s$ is extremely important to our construction for the following reason: under the involution $\tau(a,b,c,d)=(c,d,a,b)$, the fixed set of $\Gamma_{\psi_{\frac{1}{2}} \circ \phi} \Gamma_{\psi_{\frac{1}{2}} \circ \phi}$ is $\Gamma_{\psi_{\frac{1}{2}}\circ \phi}$, whereas we will see in Proposition \ref{propn:isomorphism} that $\mathit{HF}(\Gamma_{\psi_{\frac{1}{2}} \circ \phi} \Gamma_{\psi_{\frac{1}{2}} \circ \phi}, \Delta\Delta)$ is identified with $\mathit{HF}(\Gamma_{\psi_1 \circ \phi^2}, \Delta)$. Ergo the spectral sequence of Theorem \ref{thm:main} goes from $\mathit{HF}(\Gamma_{\psi_1 \circ \phi^2}, \Delta)\otimes \mathbb Z_2((\theta))$ to $\mathit{HF}(\Gamma_{\psi_{\frac{1}{2}}\circ \phi}, \Delta)$. Fortunately, both theories are independent of the choice of sufficiently small $s$.
\end{remark}

\subsection{Floer cohomology with the diagonal and the proof of Proposition \ref{propn:isomorphism}} Finally, we turn our attention to the proof of Proposition \ref{propn:isomorphism}. We first present a lemma whose proof is very similar to the identification between $\mathit{HF}(\phi)$ and $\mathit{HF}(\Gamma_{\phi}, \Lambda)$; the formulation we quote here is from Ganatra \cite[Proposition 8.2]{MR3121862}.

\begin{lemma} \label{lemma:product}
Let $L_0,L_1$ be exact conical Lagrangian subspaces of $(N,\omega)$ with intersection contained in a compact set $K \subset N$. Let $\Delta \subset N\times N^{-}$ be the diagonal subspace. There is an isomorphism
\vskip 3mm
\hfill $\displaystyle \mathit{HF}(L_0,L_1) \simeq \mathit{HF}(L_0\times L_1, \Delta).$
\end{lemma}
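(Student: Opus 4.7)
The plan is to prove the lemma by exhibiting explicit bijections on generators and on pseudoholomorphic strips, reducing $\mathit{HF}(L_0 \times L_1, \Delta)$ to $\mathit{HF}(L_0, L_1)$ via the familiar ``folding'' trick. On generators, a point $(x, y) \in (L_0 \times L_1) \cap \Delta$ forces $x = y \in L_0 \cap L_1$, giving the immediate bijection $L_0 \cap L_1 \leftrightarrow (L_0 \times L_1) \cap \Delta$, $x \mapsto (x, x)$. In particular $(L_0 \times L_1) \cap \Delta \subset K \times K$ is compact, so after a $\kappa$-compatible perturbation the Lagrangian Floer cohomology of the product is well-defined by the discussion in Section 2.1.

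For the correspondence of holomorphic strips, equip $N \times N^{-}$ with the product almost complex structure $J \oplus (-J)$. Given a strip $U = (u_1, u_2) \co \mathbb{R} \times [0,1] \to N \times N^{-}$ solving Floer's equation with $U(s, 0) \in L_0 \times L_1$ and $U(s, 1) \in \Delta$, observe that $u_2$ is $(-J)$-holomorphic and therefore $\tilde{u}_2(s, t) := u_2(s, 1-t)$ is $J$-holomorphic. The diagonal condition $U(s,1) \in \Delta$ gives $u_1(s, 1) = u_2(s, 1) = \tilde{u}_2(s, 0)$, so $u_1$ and $\tilde{u}_2$ may be glued along $\{t = 1\}$ to produce a continuous map $v \co \mathbb{R} \times [0, 2] \to N$ which is $J$-holomorphic on each half-strip; elliptic regularity upgrades $v$ to a genuine $J$-holomorphic strip with $v(s, 0) \in L_0$ and $v(s, 2) \in L_1$. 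After conformal rescaling $[0, 2] \to [0, 1]$, this is a Floer strip for $(L_0, L_1)$ connecting the same asymptotic intersection points. The inverse construction simply splits a strip for $(L_0, L_1)$ in half and reflects the second half via $t \mapsto 2 - t$ to recover $U$. Energies, asymptotic limits, and Maslov indices mod $2$ correspond under the bijection, so the two Floer complexes are identified.

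The main technical obstacle is verifying that the setup required to run Floer theory on $(N \times N^{-}, L_0 \times L_1, \Delta)$ is inherited from the original data. One must check that $J \oplus (-J)$ is admissible near infinity in $N \times N^{-}$ with respect to the exhausting function $f \oplus f$; that $L_0 \times L_1$ and $\Delta$ are exact conical Lagrangians (exactness of $\Delta$ follows because $\lambda \oplus (-\lambda)$ restricts to zero on it, while exactness of $L_0 \times L_1$ is immediate from exactness of $L_0$ and $L_1$, and conicality in each case reduces to the fact that the Liouville flow on the product is the product of the Liouville flows); and finally that the maximum modulus principle of Khovanov--Seidel \cite[Lemma 5.5]{MR1862802} applies in the product, so that moduli spaces of strips for $(L_0 \times L_1, \Delta)$ stay in a compact region away from the contact-type boundary. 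Generic choice of a product-type family of almost complex structures yields transversality simultaneously on both sides of the folding correspondence, and the resulting chain-level isomorphism descends to the claimed isomorphism on cohomology.
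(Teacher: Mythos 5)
Your proposal is correct and is essentially the paper's own argument: the same identification of generators, the same fold/unfold correspondence of strips (with the second factor reflected in $t$ because of $-J$), gluing along the diagonal seam, elliptic regularity across the seam, and matching of regularity, together with the same routine checks that $L_0\times L_1$ and $\Delta$ are exact, conical, and have compact intersection so the product theory is defined. The only cosmetic difference is that the paper arranges transversality by first choosing a regular $t$-dependent family $J_t$ for $(L_0,L_1)$ on $N$ and transporting it to the product as $J_{t/2}\oplus(-J_{(1-t)/2})$, which is the cleanest way to make precise your remark that a generic product-type family works on both sides at once.
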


\noindent Let us sketch the proof of this lemma. First, observe that since the Liouville flow on $N\times N^-$ is split, $L_0\times L_1$ is conical if and only if $L_0$ and $L_1$ are. Next, $(L_0\times L_1)\cap \Delta = \{(a,a): a \in L_0\cap L_1\}$, hence is compact. If we assume that we have already perturbed $L_0$ and $L_1$ along an exact Lagrangian isotopy to intersect transversely, the intersection $(L_0\times L_1) \cap \Delta$ is also transverse. Choose a perturbation ${\bf J} = J_t$ of the complex structure on $N$ such that $D_{\bf J}u$ is regular for every pseudoholomorphic $u \in \mathcal M({\bf J})$ and ${\bf J}$ is compatible with the structure of the symplectization near infinity. Let $\tilde{\bf J} = J_{\frac{t}{2}} \oplus -J_{\frac{1-t}{2}}$. Let $v:\mathbb R\times [0,1] \rightarrow N\times N^{-}$. Then we may \textit{unfold} the holomorphic strip $v$ into two coordinate pseudoholomorphic strips $(u_1(\frac{s}{2},\frac{t}{2}), u_2(\frac{s}{2},\frac{1-t}{2}))$ such that each $u_1(s,0)\in L_0$, $u_2(s,1) \in L_1$, and $u_1(s,1) = u_2(s,0)$ for all $t$. We can glue together $u_1$ and $u_2$ to obtain a map $u \co \mathbb R \times [0,1] \rightarrow N$; this map is $C^1$ since both $u_1$ and $u_2$ solve Floer's equation, and therefore by elliptic regularity, $u$ must in fact be smooth. Moreover, $u$ is regular if and only if $v$ is regular. Conversely,  given a pseudoholomorphic strip $u \co \mathbb R \times [0,1] \rightarrow N$, one may \emph{fold} the strip, defining $v \co \mathbb R \times [0,1] \rightarrow N \times N^{-}$ via $v(s,t) = \left(u(2s, 2t) , u(2s, 2t-1)\right)$, where smoothness of $u$ implies smoothness of $v$ and once again regularity of $v$ is equivalent to regularity of $u$. This relationship gives a bijection between pseudoholomorphic strips counted by the differential on $\mathit{CF}(L_0, L_1)$ and pseudoholomorphic strips counted by the differential on $\mathit{CF}(L_0\times L_1, \Delta)$.

We are now ready to prove Proposition \ref{propn:isomorphism}.

\begin{proof}[Proof of Proposition \ref{propn:isomorphism}]

Our goal is to show that $\mathit{HF}(\Gamma_{\phi}\Gamma_{\phi}, \Delta\Delta) \simeq \mathit{HF}(\Gamma_{\phi}^2, \Delta)$. First, we apply $\phi$ to the second and third factors of $M\times M^-\times M \times M^-$. This map preserves $\Delta\Delta$, since it takes any point $(a,b,b,a)\in \Delta\Delta$ to $(a,\phi(b),\phi(b),a)\in \Delta\Delta$.  The image of any point $(a,\phi(a),b,\phi(b)) \in \Gamma_{\phi}\Gamma_{\phi}$ under this symplectomorphism is $(a,\phi^2(a),\phi(b),\phi(b))$, so this symplectomorphism has the effect of replacing $\Gamma_{\phi}\Gamma_{\phi}$ with $\Gamma_{\phi^2}\Delta = \{(a,\phi^2(a),b,b): a,b \in M\}$. (Notice that if $\phi$ was the identity near infinity and we perturbed $\phi$ as above, we have replaced $\Gamma_{\psi_{\frac{1}{2}}\circ \phi}\Gamma_{\psi_{\frac{1}{2}}\circ \phi}$ with $\Gamma_{\psi_1 \circ \phi^2}\Delta$.) This symplectomorphism $\operatorname{id}\oplus \phi \oplus \phi \oplus \operatorname{id}$ carries pseudoholomorphic curves with respect to ${\bf J}$ to pseudoholomorphic curves with respect to the complex structure $(\operatorname{id}\oplus \phi^{-1}\oplus \phi^{-1}\oplus \operatorname{id})^*{\bf J} = {\bf J}'$. Indeed, ${\bf J}'$ is regular if ${\bf J}$ was. The only issue is that this push-forward complex structure may not respect the structure of $M^4$ at infinity, however, it is still the case that all $u \in \mathcal({\bf J})$ are contained in a compact subset of $M^4$, so if necessary we perturb ${\bf J}'$ outside of this subset to make it compatible with the structure of $M^4$ as a symplectization near infinity. This does not change the pseudoholomorphic curves.  Ergo there is an isomorphism $\mathit{HF}(\Gamma_{\phi}\Gamma_{\phi},\Delta\Delta) \simeq \mathit{HF}(\Gamma_{\phi^2}\Delta, \Delta\Delta)$.

To finish the proof, we appeal to Lemma \ref{lemma:product}. Let $N=M\times M^-$, $L_0=\Gamma_{\phi^2}$, and $L_1 = \Delta$. Then $N\times N^- = M \times M^-\times M^- \times M$, with diagonal ${\bf \Delta} = (a,b,a,b)$. After rearranging (but not changing the sign of) the third and fourth factors, we see that $\mathit{HF}(\Gamma_{\phi^2}\Delta,\Delta\Delta) \simeq \mathit{HF}(L_0\times L_1, {\bf \Delta}) \simeq \mathit{HF}(L_0,L_1) = \mathit{HF}(\Gamma_{\phi^2},\Delta)$.
\end{proof}

\begin{remark} \label{remark:equivalence} While $\mathit{HF}(\phi)$ has been introduced in terms of Lagrangian Floer cohomology in this paper, this is, as mentioned in the introduction, not the usual definition. Floer's first results for fixed points of symplectomorphisms come from Lagrangian Floer cohomology, cf. \cite{MR965228}, but he soon wrote down a more technically flexible definition in \cite{MR987770}, which is now considered standard. This was subsequently used in \cite{MR1297130}. We say a few words about that approach here. Let $\phi \co M \rightarrow M$ be a symplectomorphism of a compact symplectic manifold. If necessary, apply a small Hamiltonian perturbation to $\phi$ such that it has isolated and nondegenerate fixed points. Then the chain complex $C_*$ for the Floer cohomology of $\phi$ is generated by the fixed points of $\phi$ over some field. (In certain cases this field may be taken to be $\mathbb Z_2$, but more generally it must be a Novikov field.) The Floer differential $\partial_{\phi}$ counts maps $v \co \mathbb R \times \mathbb R \rightarrow M$ satisfying
\begin{align*}
v(s, t+1) = f(v(s,t)) \quad \quad \frac{\partial v}{\partial_s} + J_t(v)\frac{\partial v}{\partial_t} = 0 \quad \quad \lim_{s \rightarrow \infty} v(s,t) = x \quad \quad \lim_{s \rightarrow -\infty} v(s,t) = y
\end{align*}
where $x$ and $y$ are fixed points of $\phi$ and ${\bf J}'$ is family of $\omega$-compatible almost complex structures ${\bf J}'$ achieving transverality with the property that $J_{t+1} = f_* \circ J_t \circ (f^{-1})_*$. (Here the prime is intended only as a reminder that for this family of complex structures we allow $t \in \mathbb R$, rather than just $[0,1]$.)  As usual, these pseudoholomorphic cylinders are counted up to the action of $s$. When it is the case that the Lagrangian Floer cohomology of $\Gamma_{\phi}$ and $\Delta$ is well-defined, the relation between the two definitions is quite similar to the argument of Lemma \ref{lemma:product}. It is convenient to look at $\mathit{HF}(\Delta, \Gamma_{\phi})$ in $M^{-} \times M$, which is canonically isomorphic to $\mathit{HF}\Gamma_{\phi}, \Delta)$ in $M \times M^-$. There is a clear bijection between the generators of the chain complexes: a fixed point of $\phi$ corresponds to a point in $\Delta \cap \Gamma_{\phi}$. Given a family of almost complex structures ${\bf J}'$ on $M$, we consider the family of almost complex structures $-J_{\frac{1-t}{2}} \oplus J_{\frac{1+t}{2}}$ on $M^- \times M$, for $t \in [0,1]$. Then if $v \co \mathbb R \rightarrow M$ is a pseudoholomorphic cylinder counted by $\partial_{\phi}$, we define a pseudoholomorphic strip $u \co \mathbb R \times [0,1] \rightarrow M^- \times M$ via $u(s,t) = (v(\frac{s}{2}, \frac{1-t}{2}), v(\frac{s}{2}, \frac{1+t}{2}))$. The map $u$ is regular if and only if $v$ is. Conversely, given a pseudoholomorphic $u \co \mathbb R \times [0,1] \rightarrow M^- \times M$, we can unfold its components to get a cylinder $v(s,t) = u(2s, 1-2t)$ for $t \in [0, \frac{1}{2}]$ and $v(s,t) = u(2s, 2t-1)$ for $t \in [\frac{1}{2},1]$, and define $v(s,t)$ via periodicity for all other $t$. As in Lemma \ref{lemma:product}, by elliptic regularity, this map is smooth, and again, regularity of $v$ is equivalent to regularity of $u$.\end{remark}

\section{Existence of a stable normal trivialization}

In this section, we discuss the concept of a stable normal trivialization, the major technical hypothesis of Theorem \ref{thm:localization}. We then show that the manifold $M^4 = M \times M^- \times M \times M^-$ with the Lagrangians $\Gamma_{\phi}\Gamma_{\phi}$ and $\Delta\Delta$ and involution $\tau(a,b,c,d)=(c,d,a,b)$ carries a stable normal trivialization.

Recall that $N$ is an exact symplectic manifold which is convex at infinity, and $L_0$ and $L_1$ are two Lagrangian submanifolds which are exact with the prpoerty that $L_0 \cap L_1$ is compact, and all holomorphic curves counted by the Floer differential lie in a compact set. As in the introduction, let $\tau \co N \rightarrow N$ be a symplectic involution which preserves $L_0$ and $L_1$ setwise. Then the fixed set of $\tau$ is a symplectic manifold $N^{\operatorname{inv}}$ containing Lagrangians $L_0^{\operatorname{inv}}$ and $L_1^{\operatorname{inv}}$, the fixed sets of $L_0$ and $L_1$ respectively.

We need to set up a little notation to introduce Seidel and Smith's technical conditions. Let $N(N^{\operatorname{inv}})$ be the normal bundle of $N^{\operatorname{inv}} \subset N$, and let $\Upsilon(N^{\operatorname{inv}})$ be the pullback of $N(N^{\operatorname{inv}})$ to $N^{\operatorname{inv}}\times I$, where $I$ is the unit interval $[0,1]$. For $i=0,1$, let $NL_i^{\operatorname{inv}}$ be the Lagrangian normal bundle of $L_i^{\operatorname{inv}}\subset L_i$. Let $NL_i^{\operatorname{inv}}\times \{j\}$ denote the copy of $NL_i^{\operatorname{inv}}$ which is a subbundle of $\Upsilon(N^{\operatorname{inv}})|_{L_i^{\operatorname{inv}} \times \{j\}}$.

\begin{definition} \cite[Definition 18]{MR2739000} A \emph{stable normal trivialization} of $(N, L_0, L_1)$ consists of the following data:
\begin{itemize}

\item A unitary trivialization $\Psi \co \Upsilon(N^{\operatorname{inv}}) \oplus \epsilon_{\mathbb C}^{m} \rightarrow \mathbb C^{n+m}=\mathbb C^k$.

\item A Lagrangian subbundle $\Lambda_0 = \Upsilon(N^{\operatorname{inv}})|_{L_0\times I}$ such that $\Lambda_0|_{L_0 \times \{0\}} = (NL_0^{\operatorname{inv}} \times \{0\}) \oplus \epsilon_{\mathbb R}^{m}$ and $\Psi(\Lambda_0|_{L_0 \times \{1\}}) = \mathbb R^k$. 

\item A Lagrangian subbundle $\Lambda_1 = \Upsilon(N^{\operatorname{inv}})|_{L_1\times I}$ such that $\Lambda_1|_{L_1 \times \{0\}} = (NL_1^{\operatorname{inv}} \times \{0\}) \oplus i\epsilon_{\mathbb R}^{m}$ and $\Psi(\Lambda_0|_{L_1 \times \{1\}}) = i\mathbb R^k$. \mbox{\qedhere}

\end{itemize}
\end{definition}

We quote Seidel and Smith's main result (which also appeared in the introduction) again for the reader's convenience.

\begin{theorem'}\cite[Theorem 20]{MR2739000} Suppose that $(N, L_0, L_1)$ carries a stable normal trivialization. Then there is a spectral sequence whose $E^2$ page is $\mathit{HF}(L_0,L_1)\otimes \mathbb Z_2((\theta))$ and whose $E^{\infty}$-page is $\mathbb Z_2((\theta))$-isomorphic to $\mathit{HF}(L_0^{\operatorname{inv}}, L_1^{\operatorname{inv}})$. In particular, there is a rank inequality
\vskip 3mm
\hfill $\displaystyle \operatorname{rk}(\mathit{HF}(L_0,L_1)) \geq \operatorname{rk}(\mathit{HF}(L_0^{\operatorname{inv}}, L_1^{\operatorname{inv}})).$
\end{theorem'}

Before showing that $(M^4, \Gamma_{\phi}\Gamma_{\phi}, \Delta\Delta)$ has a stable normal trivialization, let us pause for a quick note on the proof of Theorem \ref{thm:localization} in the case of noncompact Lagrangians. In the proof, the stable normal trivialization is used to deform the Lagrangians $L_0$ and $L_1$ in such a way that pseudoholomorphic strips inside $N^{\operatorname{inv}}$ remain regular when considered as psedudoholomorphic strips in $N$. Importantly, this deformation fixes the invariant sets $L_i^{\operatorname{inv}}$ for $i=0,1$. When the fixed sets are noncompact, we would like to have this deformation be compactly supported. Therefore, we choose a compact set $K \subset N$ that contains the image of all $u \subset \mathcal M({\bf J})$ (which always exists because the intersection of $L_0^{\operatorname{inv}}$ and $L_1^{\operatorname{inv}}$ is compact and $N$ is convex at infinity) and interpolate between the full deformation given by the stable normal trivialization on a neighborhood of $K$ and the identity near infinity.

We now show that, under the hypotheses of Theorem \ref{thm:main}, $(M^4, \Gamma_{\phi}\Gamma_{\phi},\Delta\Delta)$ carries a stable normal trivialization. Recall that let $M$ is a symplectic manifold which is exact, convex at infinity, and whose tangent bundle is stably trivializable, and $\phi \co M\rightarrow M$ is an exact symplectomorphism which, near infinity, is the symplectization of a contactomorphism $\hat{\phi}$ such that neither $\hat{\phi}$ nor $\hat{\phi}^2$ has fixed points. As per Remark \ref{rmk:cases}, up to perturbation this is the only case we need to check. Recall from the introduction that the manifolds salient to our investigation have the following form.
\begin{align*}
N&=M^4 = \{(a,b,c,d): a,b,c,d \in M\} &&& N^{\mathrm{inv}}&=(M^4)^{\mathrm{inv}}= \iota(M \times M^{-})=\{(a,b,a,b): a,b \in M\}\\
L_0&=\Gamma_{\phi}\Gamma_{\phi} = \{(a,\phi(a),b,\phi(b): a,b \in M\}&&& L_0^{\mathrm{inv}}&=\Gamma_{\phi}\Gamma_{\phi}^{\mathrm{inv}} = \iota(\Gamma_{\phi}) = \{(a,\phi(a), a, \phi(a): a\in M\}\\
L_1&=\Delta\Delta = \{(a,b,b,a): a,b \in M\} &&& L_1^{\mathrm{inv}}&=\Delta\Delta^{\mathrm{inv}} = \iota(\Delta) = \{(a,a,a,a): a \in M\}
\end{align*}
Here $M^4$ indicates $M \times M^{-} \times M \times M^{-}$, and $\iota$ is the diagonal embedding. Ergo we have tangent bundles as follows.
\begin{align*}
TM^4 &= \{(v,w,x,y): v,w,x,y \in TM\}&&&T(M \times M^{-}) &= \{(v,w,v,w): v,w \in TM\}\\
T\Gamma_{\phi}\Gamma_{\phi} &= \{(v, \phi_*v, w, \phi_* w)\}&&&T\Gamma_{\phi} &= \{(v,\phi_*v, v, \phi_* v): v \in TM\}\\
T\Delta\Delta &= \{(v,w,w,v): v,w \in TM\}&&&T\Delta &= \{(v,v,v,v): v \in TM\}
\end{align*}
\noindent Therefore, our normal bundles $N(M\times M^{-})$, $N\Delta$, and $N\Gamma_{\phi}$ are as follows.%
\begin{align*}
N(M\times M^{-}) &= \{(v,w,-v,-w): v,w \in TM\}\\
N\Gamma_{\phi} &= \{(v, \phi_* v, -v, -\phi_* v): v \in TM\}\\
N\Delta &= \{(v,-v,-v,v): v \in TM \}
\end{align*}
Observe that these normal bundles are identified with $T(M\times M^{-})$ and its subbundles $T\Gamma_{\phi}$ and $N\Delta$ by projection onto the first two coordinates. So we don't in fact need to work with normal bundles any further; it will suffice to work with $T(M\times M^{-})$ and its Lagrangian subbundles $N\Delta$ over $\Delta$ and $T\Gamma_{\phi}$ over $\Gamma_{\phi}$. By assumption, the vector bundle $TM$ is stably trivializable as a complex vector bundle. This implies that it admits a unitary trivialization with respect to the triple $(\omega, J, g)$, where $\omega$ is the symplectic form on $TM$, $J$ is the almost complex structure, and $g$ is the Hermitian metric induced by $\omega$ and $J$ (cf, e.g., \cite[Section 2.6]{MR1373431}). We choose such a trivialization, as follows.
\begin{align*}
\psi \co TM \oplus \epsilon_{\mathbb C}^{m} &\rightarrow M \times \mathbb C^{k} \\
(x,(v,c)) &\rightarrow (x, \psi_x(v,c))
\end{align*}
Here $\epsilon_{\mathbb C}^m$ is the trivial bundle $M \times {\mathbb C}^m$ over $M$, and $k=n+m$. The expression $(v,c)$ indicates a vector $v \in TM$ and a vector $c \in \epsilon_{\mathbb C}^m$. We will need to keep careful track of both of these vectors to ensure that our final trivialization of $\Upsilon(T(M\times M^{-}))$ is unitary.

Since $TM \oplus \epsilon^m_{\mathbb C}$ is trivializable, there is a map $\Phi \co M \rightarrow Sp(2k)$, where $\Phi(x)$ is the map $\psi \circ (\phi_{*} \oplus \operatorname{I}_m) \circ \psi^{-1}|_{x} \co \mathbb C^{k} \simeq (TM\oplus \epsilon^m_{\mathbb C})_{x} \rightarrow (TM\oplus \epsilon^m_{\mathbb C})_{x} \simeq \mathbb C^{k}$. Here $\operatorname{I}_m$ is the identity map on $\epsilon^m_{\mathbb C}$. Since $U(k)$ is a deformation retract of $Sp(2k)$, $\Phi$ is homotopic to a map to the unitary group.

\begin{proposition}\label{propn:stable}
If $\Phi$ is nullhomotopic, then $(M^4, \Delta\Delta, \Gamma_{\phi}\Gamma_{\phi})$ has a stable normal trivialization.
\end{proposition}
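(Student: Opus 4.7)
The plan is to construct the three pieces of data $(\Psi,\Lambda_0,\Lambda_1)$ explicitly, using the stable trivialization $\psi\co TM\oplus \epsilon_{\mathbb C}^{m_0}\to \mathbb C^{k_0}$ to build $\Psi$ (and to handle $\Lambda_1$ over $\Delta$), and the nullhomotopy of $\Phi$ to build $\Lambda_0$ over $\Gamma_{\phi}$. In the notation of Definition 18 we will take $n=2n_M$ (where $n_M=\dim_{\mathbb C}M$), $m=2m_0$, and $k=2k_0$, since the normal bundle of $M\times M^-$ in $M^4$ is identified with $T(M\times M^-)$ by projection.

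First, I would define $\Psi_0:=\psi\oplus \bar\psi$, where $\bar\psi(w,c):=\overline{\psi(w,\bar c)}$ is the unitary trivialization of $TM^-\oplus \epsilon^{m_0}_{\mathbb C}$ with respect to the conjugate complex structure. Then I would post-compose with a \emph{constant} unitary change-of-basis $U\co \mathbb C^{k_0}\oplus \mathbb C^{k_0}\to \mathbb C^{2k_0}$ of Cayley type, e.g.\ $U(x,y)=\tfrac{1}{\sqrt 2}(i(x-y),-(x+y))$, setting $\Psi:=U\circ \Psi_0$. This $U$ is designed so that any Lagrangian of the form $\{(u,\bar u)\}$ maps into $\mathbb R^{2k_0}$, while any Lagrangian of the form $\{(u,-\bar u)\}$ maps into $i\mathbb R^{2k_0}$; this is the identity computing $\tfrac{1}{\sqrt 2}(i(u\mp \bar u),-(u\pm \bar u))=(\pm\sqrt 2 i\operatorname{Re}(u),\mp\sqrt 2 i\operatorname{Im}(u))$ or its real analogue.

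Next, I would construct $\Lambda_1$ over $\Delta$. The key symmetry is that the complex structure $J\oplus (-J)$ on $T(M\times M^-)$ satisfies $J(T\Delta)=N\Delta$, while multiplication by $i$ on $\epsilon^{2m_0}_{\mathbb C}$ carries $\epsilon^{2m_0}_{\mathbb R}$ to $i\epsilon^{2m_0}_{\mathbb R}$; thus it suffices to produce a homotopy from $T\Delta\oplus \epsilon^{2m_0}_{\mathbb R}$ to $\Psi^{-1}(\mathbb R^{2k_0})$ over $\Delta$ and apply $J$. Under $T(M\times M^-)|_{\Delta}\cong TM\otimes_{\mathbb R}\mathbb C$, the totally real subbundle $T\Delta\oplus\epsilon^{2m_0}_{\mathbb R}$ corresponds to $TM\oplus \epsilon^{2m_0}_{\mathbb R}\subset (TM\oplus\epsilon^{2m_0}_{\mathbb R})\otimes_{\mathbb R}\mathbb C$. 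Since $\psi$ implies that $TM$ is stably \emph{real}-trivial (as $TM\oplus \epsilon^{2m_0}_{\mathbb R}$ is the underlying real bundle of $TM\oplus \epsilon^{m_0}_{\mathbb C}$), a real trivialization complexifies to a unitary trivialization of the ambient bundle sending $T\Delta\oplus\epsilon^{2m_0}_{\mathbb R}$ to $\mathbb R^{2k_0}$. Any two unitary trivializations differ by a map into $U(2k_0)$, which is connected, giving the required homotopy of Lagrangian subbundles through the pullback of $U(2k_0)/O(2k_0)$.

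Finally, I would construct $\Lambda_0$ over $\Gamma_{\phi}$ using a nullhomotopy $\Phi_t\co M\to U(k_0)$ with $\Phi_0=\Phi$ and $\Phi_1=\mathrm{I}$. Via $\psi$, each $\Phi_t$ yields a symplectic bundle isomorphism $\tilde\phi^{\,t}_{*}\co (TM\oplus \epsilon^{m_0}_{\mathbb C})_{x}\to (TM\oplus \epsilon^{m_0}_{\mathbb C})_{\phi(x)}$ with $\tilde\phi^{\,0}_*=\phi_*\oplus \mathrm{Id}$, and the graph of $\tilde\phi^{\,t}_*$ is a Lagrangian subbundle of $T(M\times M^-)\oplus \epsilon^{2m_0}_{\mathbb C}$ over $\Gamma_{\phi}$. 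At $t=0$ this graph agrees with $T\Gamma_{\phi}\oplus \epsilon^{2m_0}_{\mathbb R}$ (up to a small canonical Lagrangian homotopy accounting for how $\epsilon^{2m_0}_{\mathbb R}$ sits in $\epsilon^{2m_0}_{\mathbb C}$); at $t=1$, it is the ``trivialization-induced'' graph, which by the computation above is precisely $\Psi^{-1}(\mathbb R^{2k_0})|_{\Gamma_\phi}$ after applying $U$. Concatenation yields $\Lambda_0$.

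The main obstacle is ensuring that a single trivialization $\Psi$ supports both $\Lambda_0$ and $\Lambda_1$. This is exactly what the Cayley-type $U$ accomplishes: it simultaneously converts the identity-graph Lagrangian into $\mathbb R^{2k_0}$ and the anti-diagonal $N\Delta$-type Lagrangian into $i\mathbb R^{2k_0}$, so that stable triviality of $TM$ handles one condition and nullhomotopy of $\Phi$ handles the other. The only genuine input beyond linear algebra is the nullhomotopy hypothesis, which is precisely what allows the unitary $\tilde\phi^{\,t}_{*}$ to be constructed, and hence what enables the interpolating family of graph Lagrangians.
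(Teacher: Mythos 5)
Your overall architecture is the same as the paper's (build $\Psi$ from the stable trivialization $\psi$ composed with a constant unitary change of basis, take $\Lambda_0$ to be the family of graph Lagrangians coming from a nullhomotopy of $\Phi$, and handle the $\Delta$-side condition using stable triviality of $TM$), but the point where the construction is actually delicate --- the stabilizing trivial summands --- is exactly where your argument breaks. With the split trivialization $\Psi_0=\psi\oplus\bar\psi$, $\bar\psi(w,c)=\overline{\psi(w,\bar c)}$, the subbundles you must match are \emph{not} of the conjugate-(anti)diagonal form $\{(u,\pm\bar u)\}$: in $N\Delta\oplus(i\epsilon^{m_0}_{\mathbb R})^{\oplus 2}$ the two copies of $\mathbb R^{m_0}$ are independent, so at $(x,x)$ its image is $\{(\psi_x(v,ir_1),\,-\overline{\psi_x(v,ir_2)})\co v\in T_xM,\ r_1,r_2\in\mathbb R^{m_0}\}$, which contains $(\psi_x(0,ir_1),0)$ and hence is neither contained in $\{(u,-\bar u)\}$ nor constant in $x$ (it varies through the $x$-dependent subspace $\psi_x(0\oplus i\mathbb R^{m_0})$). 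The same conjugation mismatch on the trivial summand spoils your claimed $t=1$ endpoint for $\Lambda_0$: the $\Psi_0$-image of the graph of $\psi^{-1}_{\phi(x)}\circ\psi_x$ is $\{(u,\overline{\psi_{\phi(x)}(w,\bar d)})\}$ with $(w,d)=\psi_{\phi(x)}^{-1}(u)$, not $\{(u,\bar u)\}$, so the Cayley rotation $U$ does not carry it to $\mathbb R^{2k_0}$. The paper's trivialization is engineered precisely to avoid this: it mixes the two stabilizing summands between the two factors (sending $(c,d)$ to $\tfrac{ic+d}{\sqrt2}$ in the first factor and, conjugated, $\tfrac{i\bar c+\bar d}{\sqrt2}$ in the second), after which $\Psi_1\bigl(N\Delta\oplus(i\epsilon^{m}_{\mathbb R})^{\oplus2}\bigr)$ is literally the constant Lagrangian $\{(d,-\bar d)\}$; composing with one constant unitary $A$ then lets $\Lambda_1$ be taken \emph{constant} in $t$, and the only homotopy ever needed is the one for $\Lambda_0$ supplied by the nullhomotopy of $\Phi$.

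Because your $\Psi$ does not make the $\Delta$-side condition hold on the nose, you are forced to produce a genuine homotopy for $\Lambda_1$, and your justification for it is a non sequitur: two unitary trivializations differ by a map $g\co M\to U(2k_0)$, and connectivity of $U(2k_0)$ only joins each individual value $g(x)$ to the identity; what you need is that the induced family of Lagrangians $x\mapsto g(x)\cdot\mathbb R^{2k_0}$, i.e.\ the composite $M\to U(2k_0)/O(2k_0)$, is nullhomotopic \emph{as a map}, which is a nontrivial condition (classes in $[M,U/O]$, already a Maslov-type class in $H^1(M;\mathbb Z)$, obstruct it). A sanity check that this step cannot be free: if connectedness sufficed, the same argument would homotope the $\Gamma_\phi$-side subbundle to $\Psi^{-1}(\mathbb R^{2k_0})$ without ever using the hypothesis that $[\Phi]$ vanishes, contradicting the role that hypothesis plays. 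So as written there is a real gap; to close it you must either prove that your specific difference map is nullhomotopic (which your setup does not provide), or choose $\Psi$ as in the paper so that the $\Delta$-side subbundle is sent to a constant Lagrangian and no homotopy is needed there at all. Your $\Lambda_0$ construction via graphs of the unitary family induced by the nullhomotopy, and the small constant-in-$x$ adjustment at $t=0$ on the trivial summand, are fine once the trivialization is corrected.
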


\begin{proof} Recall that we start with a stable unitary trivialization $\psi$ of the tangent bundle $TM$ with respect to $(\omega, J, g)$. This gives us a stable trivialization $\Psi_1$ of $T(M\times M^{-})$, given by
\begin{align*}
\Psi_1 \co TM \oplus TM^{-} \oplus (\epsilon^m_{\mathbb C})^{\oplus 2}  &\rightarrow (M\times M^{-})\times (\mathbb C^k\oplus \mathbb C^k) \\
\left((x,y), (v,w), (c,d)\right) &\mapsto \left(x,y, \psi_x\left(v,\frac{ic+d}{\sqrt{2}}\right), \overline{\psi_y\left(w, \frac{i\overline{c} +\overline{d}}{\sqrt{2}}\right)}\right).
\end{align*}
Note that there is a slight mismatch in the factors above; $(x,y)$ is a point of $M\times M$, whereas $v \in TM_{x}$ and $w \in TM_{y}$, in the spirit of grouping points together and vectors together.

The map $\Psi_1$ is a unitary trivialization with respect to the triple $(\omega \oplus -\omega \oplus \omega_{\mathrm{std}}^{\oplus 2}, \tilde{J}=J\oplus -J \oplus i^{\oplus 2}, g \oplus g \oplus g_{\mathrm{std}}^{\oplus 2})$, on $TM \oplus TM^{-} \oplus (\epsilon^m_{\mathbb C})^{\oplus 2} $. The image of $N\Delta\oplus (i\epsilon^m_{\mathbb R})^{\oplus 2}|_{(x,x)}$ under $\Psi_1$ is the Lagrangian subspace 
\begin{align*}
L &=\left \{\left(\psi_x\left(v,\frac{-r_1+ir_2}{\sqrt{2}}\right), \overline{\psi_x\left(-v,\frac{r_1-ir_2}{\sqrt{2}}\right)}\right): v \in TM, r_1,r_2 \in \mathbb R^m\right \} \subset (\mathbb C^{k}\oplus \mathbb C^k)_{(x,x)} \\
&=\{(d,-\overline{d}): d\in \mathbb C^k\}\subset (\mathbb C^{k}\oplus \mathbb C^k)_{(x,x)}
\end{align*}
In particular, $L$ is constant and does not depend on $x$. Recall that there exists a unitary transformation $A \co \mathbb C^{2k} \rightarrow \mathbb C^{2k}$ such that $A(L) = i\mathbb R^{2k}$, that is, $A$ carries $L$ to the the purely imaginary Lagrangian subspace of $\mathbb C^{2k}$. We compose $A$ with our trivialization $\Psi_1$ to obtain a trivialization $\Psi$ of $TM \oplus TM^{-}\oplus (\epsilon_{\mathbb C}^m)^{\oplus 2}$ which sends $(N\Delta\oplus (i\epsilon_{\mathbb R}^m)^{\oplus 2})|_{(x,x)}$ to $i\mathbb R^{2k} \subset \mathbb C^{2k}$. Finally, we extend the trivialization $\Psi$ to a trivialization $\Psi$ of $\Upsilon(M \times M^{-}) \oplus (\epsilon^m_{\mathbb C})^{\oplus 2}$ via the pullback of the projection map $M\times M^{-} \times I \rightarrow M\times M^{-}$.

Now, recall that our choice of trivialization $\psi$ induces a map $\Phi \co M \rightarrow Sp(2k)$ induced by the action of $\Phi_x = (\psi \circ (\phi_*\oplus \operatorname{I}_m) \circ \psi^{-1})|_{x}$ on $\mathbb C^{2k}$, and we have assumed this map is nulhomotopic. Choose a nulhomotopy $\Phi^t$  between $\Phi^0_x = (\psi \circ (\phi_* \oplus \operatorname{I}_m) \circ \psi^{-1})_{x}$ and $\Phi^1_x := \operatorname{I}_k$. For notational purposes, let us write down the homotopy $\Phi_t$ pulled back to the original vector bundle more explicitly, as follows.
\begin{align*}
\psi^{-1} \circ \Phi_t \circ \psi \co (TM \oplus \epsilon_{\mathbb C}^m) &\rightarrow (TM \oplus \epsilon_{\mathbb C}^m)\\
(x,(v,c)) &\rightarrow (x, (f^t_x(v,c), g^t_x(v,c)))
\end{align*}
Notice that $f^t_x(v,c)$ is a vector in $TM_x$ and $g^t_x(v,c)$ is a complex number. Our goal is to construct Lagrangian subbundles $\Lambda_0$ and $\Lambda_1$ of the restriction of the vector bundle $\Upsilon(M\times M^{-}) \oplus \epsilon^{2m}_{\mathbb C}$ to, respectively, $\Gamma_{\phi} \times I$ and  $\Delta \times I$ such that $(\Lambda_0)|_{\Gamma_{\phi} \times \{0\}} = T\Gamma_{\phi} \oplus (\epsilon^m_{\mathbb R})^{\oplus 2}$ whereas $\Psi((\Lambda_0)|_{\Gamma_{\phi} \times \{1\}}) = \Gamma_{\phi} \times \mathbb R^{2k}$, and $(\Lambda_1)|_{\Delta \times \{0\}} = N\Delta \oplus (i\epsilon^{m}_{\mathbb R})^{\oplus 2}$ and $\Psi((\Lambda_1)|_{\Delta \times \{1\}}) = \Delta \times i\mathbb R^{2k}$. Toward this end, consider the following vector bundles:
\begin{align*}
(\Lambda_1)|_{\Delta \otimes \{t\}} &= N\Delta \oplus (i\epsilon^m_{\mathbb R})^{\oplus 2}\\
(\Lambda_0)|_{\Gamma_{\phi}\times \{t\}} &= \{((x,\phi(x)),(v, f^t_x(v,r_2)), (r_1,g^t_x(v,r_2))): v\in TM, (r_1,r_2) \in (\epsilon_{\mathbb R}^m)^{\oplus 2} \}
\end{align*}
Then we have $(\Lambda_1)|_{\Delta \otimes \{0\}} = N\Delta \oplus (i\epsilon^m_{\mathbb R})^{\oplus 2}$ and $\Psi((\Lambda_1)|_{\{\Delta \otimes \{1\}}) = \Delta \times i\mathbb R^{2k}$, as desired. Moreover, we have $(\Lambda_0)|_{\Gamma_{\phi} \times \{0\}} = \{((v, \phi_*(v)),(r_1, r_2))\} = T\Gamma_{\phi} \otimes (\epsilon_{\mathbb R})^{\otimes 2m}$. It remains to check that $\Psi(\Lambda_0)|_{\Gamma_{\phi} \otimes \{1\}}$ is the correct bundle. Observe that
\begin{align*}
\Psi((\Lambda_0)|_{\{\Gamma_{\phi} \otimes \{1\}})|_{(x, \phi(x))} &= A \cdot \Psi_1((\Lambda_0)|_{\{\Gamma_{\phi} \otimes \{1\}}) \\
												   &= A \cdot \Psi_1((v,v), (r_1,r_2)) \\
												   &= A \cdot iL \\
												   &= \mathbb R^{2k}
\end{align*}
\noindent The last step follows because $A$ is a unitary transformation. So we have described a stably normal trivial structure on $\Upsilon(M \times M^{-})$.\end{proof}

We now summarize the proof of Theorem 1.1.

\begin{proof}[Proof of Theorem 1.1]
By Proposition \ref{propn:stable} , we see that if $\Phi\co M \rightarrow Sp(\infty)$ is nulhomotopic, then $M^4, \Gamma_{\phi}\Gamma_{\phi}$ has a stable normal trivialization. Therefore, by Theorem \ref{thm:localization}, there is a spectral sequence with $E^1$-page $\mathit{HF}(\Gamma_{\phi}\Gamma_{\phi},\Delta\Delta)\otimes \mathbb Z_2((\theta))$ and $E^{\infty}$-page $\mathbb Z_2((\theta))$-isomorphic to $\mathit{HF}(\Gamma_{\phi},\Delta) \otimes \mathbb Z_2((\theta))$. However, by Proposition \ref{propn:isomorphism}, we know there is a natural identification $\mathit{HF}(\Gamma_{\phi}\Gamma_{\phi}, \Delta\Delta) \simeq \mathit{HF}(\Gamma_{\phi^2}, \Delta)$, and the latter theory is $\mathit{HF}(\phi^2)$. Since we also have $\mathit{HF}(\Gamma_{\phi}, \Delta) \simeq \mathit{HF}(\phi)$, we conclude there is a spectral sequence with $E^1$ page identified with $\mathit{HF}(\phi^2)\otimes \mathbb Z_2((\theta))$ and $E^{\infty}$-page isomorphic to $\mathit{HF}(\phi)\otimes \mathbb Z_2((\theta))$.
\end{proof}

\begin{remark} In this paper, we have seen a proof of the identification $\mathit{HF}(\Gamma_{\phi}\Gamma_{\phi}, \Delta\Delta) \simeq \mathit{HF}(\Gamma_{\phi},\Delta)$ which uses only the tools of Lagrangian Floer cohomology. However, this isomorphism closely resembles the simplest case of composition theorems for sequences of Lagrangian correspondences in quilted Floer theories of Wehrheim and Woodward \cite{MR2657646, MR2602853} and Lekili and Lipyanskiy \cite{MR3019714}. In particular, we may regard $\mathit{HF}(\Delta\Delta, \Gamma_{\phi}\Gamma_{\phi})$ in $M^{-}\times M \times M^{-} \times M$ as the Floer cohomology of the cyclic sequence of correspondences $(\Gamma_{\phi}, \Delta, \Gamma_{\phi}, \Delta)$ in, and $\mathit{HF}(\Delta, \Gamma_{\phi^2})$ in $M^{-} \times M$ as the Floer cohomology of the same sequence after two geometric compositions. (There is a convention switch here: in the quilted Floer viewpoint it is usual to let $\mathit{HF}(\phi)$ be the Floer cohomology $\mathit{HF}(\Delta, \Gamma_{\phi})$ in $M^{-}\times M$.) From this point of view, although the isomorphism of Proposition \ref{propn:isomorphism} does not follow directly from any of the existing composition theorems because of the noncompactness of the Lagrangian correspondences involved, it is morally part of the same framework. We speculate that combining these more subtle composition theorems with Seidel--Smith localization theory might produce other interesting spectral sequences from Lagrangian Floer cohomology of the form $HF(L_{01}\circ L_{01}, \Delta) \otimes \mathbb Z_2((\theta))$ to $\mathit{HF}(L_{01},\Delta)\otimes \mathbb Z_2((\theta))$, either by using a compact Lagrangian correspondence $L_{01}$, or by producing composition theorems which are valid for conical Lagrangians.\end{remark}

\acks{I am grateful to Mohammed Abouzaid and Robert Lipshitz for suggesting considering this question and for useful conversations; thanks also to Ko Honda, Ciprian Manolescu, Paul Seidel, and Chris Woodward for their helpful input. Further, I am indebted to the referee for many suggested improvements to the exposition, and for pointing out an error in the original version of the proof of Proposition \ref{propn:stable}.}

\raggedright

\bibliographystyle{amsplain}
\bibliography{bibliography2}

\end{document}